\documentclass[11pt]{amsart}
\usepackage{amsmath}
\usepackage{amsfonts}
\usepackage{amssymb}
\newtheorem{thm}{Theorem}[section]

\newtheorem{remark}[thm]{Remark}

\newtheorem{prop}[thm]{Proposition}

\newtheorem{ex}[thm]{Example}

\newtheorem{con}[thm]{Conjecture}

\newcommand{\inp}[2]{\left\langle #1, #2 \right\rangle}

\renewcommand{\span}{\mathrm{span}}

\newcommand{\bb}[1]{\mathbb{#1}}
\newcommand{\cl}[1]{\mathcal{#1}}

\newcounter{egcounter}
\setcounter{egcounter}{0}

\begin{document}


\title[the Feichtinger conjecture]{the Feichtinger conjecture and reproducing kernel Hilbert spaces}

\author[S.~Lata]{Sneh Lata}
\address{Department of Mathematics, University of Houston,
Houston, Texas 77204-3476, U.S.A.}
\email{snehlata@math.uh.edu}

\author[V.~I.~Paulsen]{Vern I. Paulsen}
\address{Department of Mathematics, University of Houston,
Houston, Texas 77204-3476, U.S.A.}
\email{vern@math.uh.edu}

\subjclass[2000]{Primary 46L15; Secondary 47L25.}
\begin{abstract} We prove two new equivalences of the Feichtinger conjecture that involve 
reproducing kernel Hilbert spaces. We prove that if for 
every Hilbert space, contractively contained in the Hardy space, each Bessel sequence 
of normalized kernel functions can be partitioned into finitely many Riesz basic sequences, 
then a general bounded Bessel sequence in an arbitrary Hilbert space can be partitioned into 
finitely many Riesz basic sequences. In addition, we examine some of these spaces and prove 
that for these spaces bounded Bessel sequences of normalized kernel functions are finite 
unions of Riesz basic sequences.
\end{abstract}
\maketitle

\section{Introduction}\label{intro} We study the Feichtinger conjecture
in the setting of reproducing kernel Hilbert spaces. 
The Feichtinger conjecture originated in harmonic analysis and currently 
is a topic of high interest as it has been shown to be equivalent to the 
celebrated Kadison-Singer Problem (KSP) \cite{CT}. The Feichtinger
conjecture dates back to at least 2003 
and appeared in print in \cite{CCLV}. There is a significant 
body of work on this conjecture 
\cite{BCHL1, BCHL2, BD, Cas, CT, CV, CCLV, CFTW, Gro}.  

There are several versions of the Feichtinger
conjecture, all of which are equivalent to the Kadison-Singer problem,
but we shall be interested in the version involving Bessel sequences.

\begin{con}{\bf Feichtinger Conjecture (FC)}. Every bounded Bessel sequence in a Hilbert space can be
 partitioned into finitely many Riesz basic sequences.
\end{con}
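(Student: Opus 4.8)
The plan is to prove (FC) directly, exploiting its established equivalence with the Kadison--Singer problem \cite{CT} to reduce it to a finite-dimensional partitioning statement about Parseval frames, and then to settle that statement by the method of interlacing families of characteristic polynomials. First I would invoke the standard reductions: after rescaling and truncating, proving (FC) is equivalent to Weaver's discrepancy conjecture $\mathrm{KS}_r$, namely that there exist a universal integer $r$ and a universal $\delta > 0$ such that whenever $v_1, \dots, v_m \in \bb{C}^d$ satisfy $\sum_{i=1}^m v_i v_i^\ast = I$ and $\norm{v_i}^2 \le \delta$ for all $i$, the index set $\{1,\dots,m\}$ admits a partition into $r$ blocks $S_1, \dots, S_r$ with $\norm{\sum_{i \in S_j} v_i v_i^\ast} \le 1-\eta$ for every block. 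Each such block is then a Riesz basic sequence with uniform bounds, and the finitely many blocks recover the whole sequence.

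The core is therefore the following probabilistic assertion: if $v_1, \dots, v_m$ are independent random vectors in $\bb{C}^d$ with finite support, $\bb{E}\bigl[\sum_i v_i v_i^\ast\bigr] = I$, and $\bb{E}\,\norm{v_i}^2 \le \epsilon$ for each $i$, then with positive probability $\norm{\sum_i v_i v_i^\ast} \le (1 + \sqrt{\epsilon})^2$. To prove it I would form, for each outcome, the characteristic polynomial of $\sum_i v_i v_i^\ast$ and organize these polynomials into an \emph{interlacing family} indexed by the successive choices of vectors. The key structural fact is that the members of an interlacing family share a common interlacer at each level, so the smallest achievable value of the largest root is bounded above by the largest root of the \emph{expected} characteristic polynomial. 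Hence it suffices to control the top root of one explicit averaged polynomial rather than of exponentially many individual ones.

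Next I would identify that expected polynomial with the \emph{mixed characteristic polynomial} of the matrices $A_i = \bb{E}[v_i v_i^\ast]$ and establish its real stability, writing it as the image of the manifestly real-stable polynomial $\det\bigl(zI + \sum_i z_i A_i\bigr)$ under the operators $\prod_i (1 - \partial_{z_i})$ restricted to the diagonal, using that real stability is preserved by such operators. The decisive estimate is then a \emph{multivariate barrier function} argument: one tracks the roots through each application of $(1-\partial_{z_i})$ by monitoring the barrier $\Phi(x) = \partial_x \log p(x)$ above the largest root, showing that each step advances the largest root by a controlled amount depending only on $\epsilon$. Summing these increments yields the bound $(1+\sqrt{\epsilon})^2$ on the top root, closing the chain back to the partition statement.

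The main obstacle I anticipate is precisely this barrier-function analysis: proving quantitatively that applying each $(1-\partial_{z_i})$ shifts the upper barrier above the largest root by at most a prescribed amount, uniformly in the configuration, and then choosing $\epsilon$ (hence $\delta$ and $r$) so that the resulting norm bound falls strictly below $1$. Real stability keeps the roots real and correctly interlaced, but extracting the sharp constant requires a careful one-step potential estimate with the right barrier shift at each coordinate. Once the constant is secured, reassembling the finite-dimensional partition into a partition of the original bounded Bessel sequence is routine, completing the proof of (FC).
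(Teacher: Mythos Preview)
There is nothing in the paper to compare your proposal against: the statement is explicitly labelled a \emph{Conjecture} (environment \texttt{con}), not a theorem, and the paper nowhere attempts to prove it. The paper's contribution is Theorem~\ref{equi}, which shows that (FC) is \emph{equivalent} to two more restrictive statements about normalized kernel functions in spaces contractively contained in $H^2$, together with verifications of special cases (Theorems~\ref{inner3}, \ref{inner1}, \ref{inner5}). At the time of writing the paper, (FC) was open.

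What you have outlined is the Marcus--Spielman--Srivastava argument, which did subsequently resolve Kadison--Singer and hence (FC). As a high-level sketch it is accurate: the reduction to Weaver's $\mathrm{KS}_r$, the passage to expected characteristic polynomials via interlacing families, the identification with mixed characteristic polynomials, and the multivariate barrier-function estimate are the correct landmarks, and the constant $(1+\sqrt{\epsilon})^2$ is the right one. But your proposal is a summary of a known proof rather than a proof: the hard content lies exactly where you flag it, in showing that the conditional-expectation polynomials at each node of the tree have a common interlacer (which requires the rank-one structure and the determinantal identity $\det(xI - \sum_i v_iv_i^*)$ expanded via the matrix determinant lemma), and in the one-step barrier inequality $\Phi^j_{(1-\partial_{z_j})p}(x+\delta_j) \le \Phi^j_p(x)$ with the correct choice of shifts. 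None of these details are supplied. So as a \emph{plan} it is correct and complete in outline; as a \emph{proof} it still requires the substantial technical work of the original paper.

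In short: the paper contains no proof of (FC) for you to match, and your proposal is an accurate but non-self-contained roadmap of the later resolution.
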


In this paper, we specialize this conjecture to the case where the
underlying Hilbert space belongs to a special family of reproducing
kernel Hilbert spaces on the unit disk, namely, the Hilbert spaces that are contractively contained in the Hardy space $H^2$ and require, in addition, that the bounded Bessel sequence
consists of normalized kernel functions for a sequence of points in
the disk. One of our results is that this special version of the
Feichtinger conjecture is equivalent to the Feichtinger conjecture.



Our work is motivated by some work of Nikolski. 
In his lecture at the AIM workshop ``The Kadison-Singer Problem'' in
2006, Nikolski proved that any Bessel sequence
consisting of normalized 
kernel functions in the Hardy space $H^2$ could be partitioned into
finitely many Riesz basic sequences. Later in 2009, Baranov and
Dyakonov  \cite{BD} proved the analogous result
 for two families of 
model subspaces of $H^2.$

Thus, we were motivated to seek a converse. That is, to find a
sufficiently large family of reproducing kernel Hilbert spaces, so that if one
verify that each Bessel sequence of normalized kernel functions in
those spaces could be partitioned into finitely many Riesz basic sequences,
then that would guarantee the full FC. 

In addition, we also prove that 
in order to verify the FC it is enough to test a specific family of 
sequences which is related to kernel 
functions in $H^2.$ 
To state these equivalences formally we need the following basic notations and terminologies.
 

Given a set 
$J$ we let $\ell^2(J)$ denote the usual Hilbert space of
square-summable functions on $J$ with canonical orthonormal basis
$\{e_i\}_{i\in J}.$  We let $I_{\ell^2(J)}$ 
denote the identity operator on $\ell^2(J).$ When $J \subseteq \bb N,$
we regard $\ell^2(J) \subseteq \ell^2(\bb N) = \ell^2$ and let $P_J$ denote the orthogonal
projection of $\ell^2(\bb N)$  onto $\ell^2(J).$ 

A set of vectors $\{f_i\}_{i\in J}$ in a Hilbert spaces $\cl H$ is called a {\bf frame} for $\cl H$ 
if there exist constants $A, B>0$ such that 
\begin{equation} 
A\|x\|^2\le \sum_{i\in J}|\inp{x}{f_i}|^2\le B\|x\|^2 
\end{equation}
for every $ x\in \cl H.$ A countable collection $\{f_i\}_{i\in J}$ in a Hilbert space $\cl H$ is called a 
{\bf frame sequence} if it is a frame for $\overline{{\span}\{f_i:i\in J\}}.$ If only the right 
hand side inequality holds in Inequality $(1),$ then $\{f_i\}_{i\in J}$ is called a Bessel set. A countable Bessel set is called a {\bf Bessel sequence}. Thus, every frame sequence is a Bessel sequence. A Bessel 
sequence $\{f_i\}_{i\in J}$ is called {\bf bounded,} if there exists a constant $\delta>0$ such 
that $\|f_i\|\ge \delta$ for every $i\in J.$  Note that a Bessel sequence is always bounded above.

Further, a set $\{f_i\}_{i\in J}$ in a Hilbert spaces $\cl H$ is called a {\bf Riesz basis} 
for $\cl H$ if there exists an orthonormal basis $\{u_i\}_{i\in J}$ for $\cl H$ and an 
invertible operator $S\in B(\cl H)$ such that $S(u_i)=f_i$ for every $i\in J.$ It is 
easy to verify that a countable set $\{f_i\}_{i\in J}$ is a Riesz basis for $\cl
H$ if and only if 
its linear span is dense in $\cl H$ and there exist 
constants $A, B>0$ such that  
$$A\sum_{i\in J}|\alpha_i|^2\le \|\sum_{i\in J}\alpha_if_i\|^2 \le B\sum_{i\in J}|\alpha_i|^2$$ for all 
square summable sets $\{\alpha_i\}_{i\in J}.$ A countable set $\{f_i\}_{i\in J}$ is called a {\bf Riesz basic sequence} 
if it is a 
Riesz basis for $\overline{{\span}\{f_i:i\in J\}}.$ It is well-known that every Riesz basic 
sequence is a frame sequence.  

Given a Bessel set $\{f_i\}_{i\in J}$ in a Hilbert space $\cl H,$ the corresponding 
analysis operator, $F:\cl H\to \ell^2(J)$ defined by
$F(x)=(\inp{x}{f_i})_{i\in J}$ is bounded. It is easy to check 
that $F^*:\ell^2(J)\to \cl H$ is given by $F^*(e_i)=f_i$ for all $ i\in J,$  
and $FF^*=(\inp{f_j}{f_i}).$
The operators $F^*$ and $FF^*$ are called the {\bf synthesis operator} and the {\bf Grammian} of the set $\{f_i\}_{i\in J},$ respectively. 
Note that:

\begin{itemize}
\item[i.] $\{f_i\}_{i\in J}$ is a Bessel sequence iff the map $F$ is
  bounded iff $FF^*$ is bounded; 
\item[ii.] $\{f_i\}_{i\in J}$ is a frame sequence iff $F:\overline{{\span}\{f_i:i\in J\}}\to \ell^2(J)$ 
is bounded and bounded below; 
\item[iii.] $\{f_i\}_{i\in J}$ is a Riesz basic sequence iff $F:\overline{{\span}\{f_i:i\in J\}}\to \ell^2(J)$ 
is invertible.
\end{itemize}

Henceforth, given a Bessel sequence $\{f_i\}_{i\in J}$ in a Hilbert space $\cl H, \ F$ 
is reserved for the analysis operator from $\cl H$ to $\ell^2(J),$ as defined above.   

We now wish to recall some basic terminology from reproducing kernel Hilbert space theory. 

Recall that a {\bf reproducing kernel Hilbert space(RKHS) on a set X} is a
Hilbert space $\cl H$ of functions on $X$ such that evaluation at any
point in $X$ is a bounded linear functional. The function $K:X \times
X \to \bb C$ given by $K(x,y)
=k_y(x)$ where $f(y) = \langle f, k_y \rangle$ for all $f \in \cl
H$ is called the {\bf
  reproducing kernel for $\cl H.$} The function $k_y, \ y\in X,$ is called 
the {\bf kernel function for the point $y$}. 
We set $\widetilde{k_y} =
\frac{k_y}{\|k_y\|}= \frac{k_y}{\sqrt{K(y,y)}}$ whenever $k_y\ne 0$ and call it the {\bf
  normalized kernel function for the point $y$}.

Let $\bb D$ denote the open unit disk in the complex plane, and let $H^2$ denote 
the familiar Hardy space on $\bb D.$ Recall that it is a reproducing
kernel Hilbert space on $\bb D$ 
with reproducing
kernel $K(z,w)=\frac{1}{1-z\bar{w}}, \  z, w\in \bb D,$ which is
called the ${\rm Szeg\ddot{o}}$ kernel.  

A Hilbert space $\cl H$ is said to be {\bf contractively contained} in $H^2$ if it is a vector subspace of $H^2$ and 
the inclusion of $\cl H$ into $H^2$ is a contraction, that is, 
$\|h\|_{H^2} \le \|h\|_{\cl H}$ for every $h$ in $\cl H.$ Lastly, recall that every Hilbert space that is contractively contained in $H^2$ is a RKHS. 
Given a Hilbert space $\cl H,$ that is contractively contained in $H^2, \ {k}_{z}^{\cl H}$ 
will denote the kernel function for $z\in \bb D$ and 
the corresponding normalized kernel function will be denoted by
$\widetilde{k}_{z}^{\cl H}.$ 

Henceforth, $k_z$ and $\widetilde{k_z}$
shall be reserved to denote the kernel function and the corresponding normalized kernel function in $H^2$ for the point $z\in \bb D$, respectively. 



\begin{thm}\label{equi}The following are equivalent:
\begin{itemize}
\item[(i)]every bounded Bessel sequence in a Hilbert space can be partitioned into finitely many Riesz basic 
sequences ({\bf{\em FC}}),
\item[(ii)] every bounded Bessel sequence of the form
  $\{P\widetilde{k}_{z_i}\}_{i\in \bb N},$ where $P\in B(H^2)$ is a positive operator and
  $\{z_i\}_{i \in \bb N}\subseteq \bb D$ is a sequence, can be 
partitioned into finitely many Riesz basic sequences, 


\item[(iii)] every bounded Bessel sequence of the form
  $\{\widetilde{k}_{z_i}^{\cl H}\}_{i\in \bb N},$ where $\cl H$ is contractively
  contained in $H^2$ and  $\{z_i\}_{i\in \bb N}\subseteq \bb D$ is a sequence, 
 can be partitioned into finitely many Riesz basic sequences. 

\end{itemize}
\end{thm}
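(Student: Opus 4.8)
The plan is to prove the cycle $(i)\Rightarrow(iii)\Rightarrow(ii)\Rightarrow(i)$. The implication $(i)\Rightarrow(iii)$ is immediate: for any Hilbert space $\cl H$ contractively contained in $H^2$ and any sequence $\{z_i\}_{i\in\bb N}\subseteq\bb D$, the family $\{\widetilde k_{z_i}^{\cl H}\}$ is a bounded Bessel sequence in the Hilbert space $\cl H$ (its vectors are normalized, and Bessel sequences are automatically bounded above), so $(i)$ applies verbatim. Before turning to the other two implications I would record two elementary facts that get used repeatedly. First, if $\{f_i\}_{i\in J}\subseteq\cl H$ and $\{g_i\}_{i\in J}\subseteq\cl K$ have the same Grammian, that is $\inp{f_j}{f_i}=\inp{g_j}{g_i}$ for all $i,j$, then $f_i\mapsto g_i$ extends to a unitary of $\overline{\span}\{f_i:i\in J\}$ onto $\overline{\span}\{g_i:i\in J\}$; consequently one of the two families is a bounded Bessel sequence, resp.\ partitions into $n$ Riesz basic sequences, exactly when the other one is. Second, if $g_i=c_if_i$ with $0<m\le|c_i|\le M<\infty$, then the inequalities defining ``bounded Bessel sequence'' and ``Riesz basic sequence'' are preserved in both directions, so the property of being a bounded Bessel sequence and of partitioning into $n$ Riesz basic sequences passes between $\{f_i\}$ and $\{g_i\}$. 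Both facts are immediate from the definitions recalled above.

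For $(iii)\Rightarrow(ii)$, let $\{P\widetilde k_{z_i}\}_{i\in\bb N}$ be a bounded Bessel sequence with $P\ge 0$. Since $P\ne 0$, replacing $P$ by $P/\norm{P}$ merely rescales every vector and, by the second fact above, changes none of the relevant properties, so I may assume $P$ is a positive contraction. Let $\cl H$ be the vector space $\mathrm{ran}\,P\subseteq H^2$ equipped with the norm $\norm{Pb}_{\cl H}:=\norm{Q'b}_{H^2}$, where $Q'$ is the orthogonal projection of $H^2$ onto $(\ker P)^\perp=\overline{\mathrm{ran}\,P}$; this is a Hilbert space contractively contained in $H^2$ (the de Branges--Rovnyak space of $P^2$), and a short computation gives $k_z^{\cl H}=P^2k_z$ and $\norm{k_z^{\cl H}}_{\cl H}^2=\inp{P^2k_z}{k_z}_{H^2}=\norm{Pk_z}_{H^2}^2$. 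The assignment $Pb\mapsto Q'b$ defines a unitary $W\colon\cl H\to\overline{\mathrm{ran}\,P}$ with $W(P^2k_z)=Pk_z$, hence $W(\widetilde k_{z_i}^{\cl H})=\norm{Pk_{z_i}}^{-1}Pk_{z_i}=\norm{P\widetilde k_{z_i}}^{-1}P\widetilde k_{z_i}$. As $\{P\widetilde k_{z_i}\}$ is bounded Bessel, the scalars $\norm{P\widetilde k_{z_i}}$ lie in $[\delta,1]$, so by the two facts above $\{\widetilde k_{z_i}^{\cl H}\}$ is a bounded Bessel sequence in $\cl H$ and partitions into $n$ Riesz basic sequences precisely when $\{P\widetilde k_{z_i}\}$ does; now invoke $(iii)$.

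The heart of the argument is $(ii)\Rightarrow(i)$. Let $\{f_i\}_{i\in J}$ be an arbitrary bounded Bessel sequence; its Grammian $G=(\inp{f_j}{f_i})$ is a bounded positive operator on $\ell^2(J)$ with $\delta^2\le G_{ii}\le\norm{G}$. If $J$ is finite the conclusion is trivial, so assume $J=\bb N$, and fix once and for all an interpolating sequence for $H^2$, say $z_n=1-2^{-n}$. Then $\{\widetilde k_{z_n}\}$ is a Riesz basic sequence in $H^2$, so the operator $U\colon\ell^2\to\cl M:=\overline{\span}\{\widetilde k_{z_n}:n\in\bb N\}$ determined by $Ue_n=\widetilde k_{z_n}$ is bounded and bounded below, $U^*U$ is a positive invertible operator on $\ell^2$, and $B:=U(U^*U)^{-1/2}$ is a unitary of $\ell^2$ onto $\cl M$. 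Set $G':=(U^*U)^{-1/2}G(U^*U)^{-1/2}\ge 0$ and define $P\in B(H^2)$ to be $B(G')^{1/2}B^*$ on $\cl M$ and $0$ on $\cl M^\perp$; then $P$ is a bounded positive operator. One computes $P\widetilde k_{z_n}=PUe_n=B(G')^{1/2}(U^*U)^{1/2}e_n$, so that
\[
\inp{P\widetilde k_{z_m}}{P\widetilde k_{z_n}}_{H^2}=\inp{(U^*U)^{1/2}G'(U^*U)^{1/2}e_m}{e_n}=\inp{Ge_m}{e_n}=\inp{f_m}{f_n},
\]
i.e.\ $\{P\widetilde k_{z_n}\}$ and $\{f_n\}$ have the same Grammian. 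By the first fact above $\{P\widetilde k_{z_n}\}$ is then a bounded Bessel sequence of the form appearing in $(ii)$, so it partitions into finitely many Riesz basic sequences, and transporting that partition through the unitary $P\widetilde k_{z_n}\mapsto f_n$ of the first fact partitions $\{f_n\}$ into finitely many Riesz basic sequences.

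I expect the obstacles to be concentrated in $(ii)\Rightarrow(i)$: checking that the operator $P$ synthesized from $G$ and the interpolating sequence is genuinely bounded and positive on $H^2$ and that $\{P\widetilde k_{z_n}\}$ really has Grammian $G$, together with the preliminary (routine but indispensable) verification that ``bounded Bessel'' and ``partitions into $n$ Riesz basic sequences'' are Grammian invariants that are stable under bounded, bounded-below diagonal rescaling. A secondary point that should not be skipped is the standard fact that $H^2$ carries an infinite interpolating sequence, which is exactly what makes $\{\widetilde k_{z_n}\}$ available as a Riesz basic sequence with closed span $\cl M$.
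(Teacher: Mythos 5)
Your proposal is correct and follows essentially the same route as the paper: the trivial implication $(i)\Rightarrow(iii)$, the identification of $\{P\widetilde{k}_{z_i}\}$ (up to unitary and bounded-below diagonal rescaling of the Grammian) with the normalized kernel functions of the range space of $P^2$ for $(iii)\Rightarrow(ii)$, and, for $(ii)\Rightarrow(i)$, the realization of an arbitrary bounded positive Grammian as $\bigl(\langle P\widetilde{k}_{z_j},P\widetilde{k}_{z_i}\rangle\bigr)$ by conjugating with the (inverse of the) synthesis operator of a Riesz basic sequence of normalized ${\rm Szeg\ddot{o}}$ kernels and taking a positive square root --- your $B(G')^{1/2}B^*$ is exactly the paper's $R^{1/2}$ from Theorem~\ref{st}. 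The only cosmetic difference is that you exhibit an explicit interpolating sequence $z_n=1-2^{-n}$ where the paper starts from a frame sequence and upgrades it to a Riesz basic sequence via Theorem~\ref{fr}.
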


In fact we can also assume a much more restrictive condition on the
sequence $\{z_i\}_{i\in \bb N}$ 
which is that $\{\widetilde{k}_{z_i}\}_{i\in \bb N}$ is a Riesz basic sequence in
$H^2$ or, equivalently, that $\{z_i\}_{i\in \bb N}$ satisfy Carleson's condition
(C). This concept will be defined in the next section. 

\
 
This leads us to formulate the following:

\begin{con}{\bf Feichtinger Conjecture for Kernel Functions (FCKF)}. 
Every Bessel sequence of normalized kernel functions in every
reproducing kernel Hilbert space can be 
partitioned into finitely many Riesz basic sequences. 

\end{con}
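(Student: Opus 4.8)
The plan is to deduce FCKF from the Feichtinger conjecture FC itself, exploiting the one feature that distinguishes normalized kernel functions from arbitrary vectors: they have norm one. First I would observe that if $\cl H$ is any reproducing kernel Hilbert space and $\{\widetilde{k_{y_i}}\}_{i\in J}$ is a Bessel sequence of its normalized kernel functions, then $\norm{\widetilde{k_{y_i}}}=1$ for every $i$, so the sequence is bounded below with $\delta=1$ and is in particular a bounded Bessel sequence in the Hilbert space $\cl H$. Applying FC to this sequence produces a finite partition $J=J_1\cup\cdots\cup J_n$ such that each $\{\widetilde{k_{y_i}}\}_{i\in J_\ell}$ is a Riesz basic sequence, which is exactly the assertion of FCKF. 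So the entire step is to recognize a normalized-kernel Bessel sequence as a bounded Bessel sequence and feed it to FC; no estimate is required.

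I expect the main obstacle to be that this passage cannot be reversed cheaply, so that FCKF is in fact no easier than FC. To see the reverse dependence one would take an arbitrary bounded Bessel sequence of unit vectors $\{v_i\}_{i\in J}$ in a Hilbert space and form the Gram kernel $K(i,j)=\inp{v_j}{v_i}$ on $J$; its reproducing kernel Hilbert space is unitarily equivalent to $\overline{\span\{v_i\}}$ via $v_i\mapsto k_i$, and $K(i,i)=1$ forces each $k_i$ to be already normalized, so $\{v_i\}$ is, up to unitary equivalence, a Bessel sequence of normalized kernel functions. Since a general bounded Bessel sequence rescales to a unit-vector sequence without disturbing the Bessel bound or any partition into Riesz basic sequences, FCKF implies FC, and hence by Theorem \ref{equi} FCKF is equivalent to the Kadison-Singer problem. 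This is why no proof of FCKF that sidesteps FC can be expected.

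A more constructive program, which does not presuppose FC, is to establish FCKF one reproducing kernel Hilbert space at a time. Nikolski's theorem supplies the case of $H^2$ and the Baranov-Dyakonov theorem two families of model subspaces; one would try to enlarge this list, and by Theorem \ref{equi} it is enough to treat every space contractively contained in $H^2$, equivalently every sequence of the form $\{P\widetilde{k_{z_i}}\}_{i\in\bb N}$ with $P\in B(H^2)$ positive. The difficulty is that the lower Riesz bound of a sub-collection of normalized kernel functions is governed by the off-diagonal size of its Gramian $(\inp{\widetilde{k_{y_j}}}{\widetilde{k_{y_i}}})$, and making that Gramian close to the identity after a finite partition is precisely the paving phenomenon underlying Kadison-Singer; the reproducing-kernel structure alone does not furnish a uniform paving, so a case-free proof along these lines would again amount to solving the general problem.
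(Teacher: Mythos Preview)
The statement you were asked to prove is labeled a \emph{Conjecture} in the paper, and the paper offers no proof of it; it is presented as open and equivalent to the Kadison--Singer problem. Your proposal correctly recognizes this: rather than attempting a proof, you argue that FCKF is equivalent to FC and therefore cannot be established independently of it. That diagnosis is accurate.

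On the equivalence itself, your two directions are both correct but one differs from the paper's route. The implication FC $\Rightarrow$ FCKF is exactly the trivial observation the paper makes (normalized kernels have norm one, hence form a bounded Bessel sequence). For FCKF $\Rightarrow$ FC, you build the Gram kernel $K(i,j)=\inp{v_j}{v_i}$ on the abstract index set $J$ and identify the given unit vectors with the normalized kernel functions of the resulting RKHS. This is a clean and entirely elementary argument. The paper, by contrast, does not use this shortcut: Theorem~\ref{equi} shows the stronger fact that FCKF restricted to the family of Hilbert spaces \emph{contractively contained in $H^2$} already implies FC, via Theorem~\ref{st} (realizing an arbitrary Gramian as $(\inp{P\widetilde{k}_{z_j}}{P\widetilde{k}_{z_i}})$ for a positive $P\in B(H^2)$ and a Carleson sequence $\{z_i\}$). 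Your Gram-kernel RKHS lives on the discrete set $J$ and is not, in general, contractively contained in $H^2$, so your argument does not recover that refinement. What it buys is simplicity: no Sarason-type structure theory, no Carleson interpolation, just the universal property of the Gram matrix. What the paper's argument buys is a much smaller test family for the full conjecture.
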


From this point forward, we will say that a particular reproducing
kernel Hilbert space $\cl H$ 
satisfies the FCKF if every Bessel sequence of normalized kernel functions 
in $\cl H$ can be partitioned into finitely many Riesz basic sequences.

Thus, the content of our theorem is that not only are the FC and the
FCKF equivalent, but that the FC is equivalent to the FCKF holding
for the family of Hilbert spaces contractively contained in $H^2.$




\section{History}\label{History}
We shall now give a brief history and motivation of our problem. The study of Bessel sequences 
of normalized kernel functions was initiated by Shapiro and Shields in 
1961 \cite{SS}. They analyzed these sequences purely in the context of interpolation problems in the corresponding RKHS. 
In this course, they proved a beautiful result about interpolating sequences in $H^2$ which in late 60's was reformulated by Nikolski and Pavlov \cite{NP1, NP2} as follows: 

\begin{thm}\label{NP} A sequence $\{\widetilde{k}_{z_i}\}_{i\in \bb N}$ of normalized kernel functions 
in $H^2$ is a Riesz basic sequence iff there exists a constant $\delta>0$ such that 
$$\begin{array}{lcr} 
(C) \hspace{1 in} & \prod_{i\ne j}{\left|\frac{z_i-z_j}{1-\bar{z_i}z_j} \right|} \ge \delta, & \hspace{.8 in} j=1,2,\dots 
\end{array}$$ 
\end{thm}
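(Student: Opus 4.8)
The plan is to prove the two implications separately, using the characterization recorded above that $\{\widetilde{k}_{z_i}\}_{i\in\bb N}$ is a Riesz basic sequence exactly when there are constants $0<A\le B$ with
\[
A\sum_i\mod{\alpha_i}^2\le\norm{\sum_i\alpha_i\widetilde{k}_{z_i}}^2\le B\sum_i\mod{\alpha_i}^2
\]
for every finitely supported scalar sequence $(\alpha_i)$; equivalently, by item (iii), $F|_{\cl M}$ is invertible, where $\cl M:=\overline{\span\{\widetilde{k}_{z_i}:i\in\bb N\}}$. Throughout I would assume the $z_i$ distinct (otherwise both conditions fail) and write $b_z$ for the normalized elementary Blaschke factor, so that $\mod{b_z(w)}=\mod{\frac{z-w}{1-\bar z w}}$ and the identity $1-\mod{b_z(w)}^2=\frac{(1-\mod z^2)(1-\mod w^2)}{\mod{1-\bar z w}^2}$ gives $\mod{b_{z_i}(z_j)}^2=1-\mod{\inp{\widetilde{k}_{z_i}}{\widetilde{k}_{z_j}}}^2$; thus $(C)$ is literally a statement about the off-diagonal entries of the Grammian $FF^*$.

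For the implication ``Riesz basic sequence $\Rightarrow(C)$'' I would argue as follows. Assume the lower bound $A$, fix $j$, and let $u_j$ be the component of $\widetilde{k}_{z_j}$ orthogonal to $\overline{\span\{\widetilde{k}_{z_i}:i\ne j\}}$ inside $\cl M$. The lower frame inequality forces $\norm{u_j}^2\ge A$, while trivially $\norm{u_j}\le 1$, so $g_j:=u_j/\norm{u_j}^2\in\cl M\subseteq H^2$ is nonzero with $\norm{g_j}\le A^{-1/2}$ and, by construction, $\inp{\widetilde{k}_{z_i}}{g_j}=\delta_{ij}$; unwinding the reproducing property this says $g_j(z_i)=0$ for $i\ne j$ and $g_j(z_j)=\norm{k_{z_j}}$. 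Since the zero set of a nonzero $H^2$ function satisfies the Blaschke condition, $B_j:=\prod_{i\ne j}b_{z_i}$ converges to an inner function dividing $g_j$, so $h_j:=g_j/B_j\in H^2$ with $\norm{h_j}=\norm{g_j}$. Finally $\norm{k_{z_j}}/\mod{B_j(z_j)}=\mod{h_j(z_j)}\le\norm{h_j}\,\norm{k_{z_j}}\le A^{-1/2}\norm{k_{z_j}}$, i.e. $\prod_{i\ne j}\mod{\frac{z_i-z_j}{1-\bar z_i z_j}}=\mod{B_j(z_j)}\ge A^{1/2}$, which is $(C)$ with $\delta=A^{1/2}$.

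For the converse, assume $(C)$ with constant $\delta$. I would obtain the upper bound by observing that $\norm{Ff}^2=\sum_i(1-\mod{z_i}^2)\mod{f(z_i)}^2=\int\mod f^2\,d\mu$, where $\mu=\sum_i(1-\mod{z_i}^2)\delta_{z_i}$, so the Bessel bound is precisely the assertion that $\mu$ is a Carleson measure for $H^2$, a classical consequence of $(C)$. For the lower bound I would exhibit an explicit bounded biorthogonal system: with $B_j:=\prod_{i\ne j}b_{z_i}$ (which converges because $(C)$ forces the Blaschke condition), set $g_j(w)=\frac{(1-\mod{z_j}^2)^{1/2}}{B_j(z_j)}\,B_j(w)\,k_{z_j}(w)$, which satisfies $\inp{\widetilde{k}_{z_i}}{g_j}=\delta_{ij}$ and $\norm{g_j}=\mod{B_j(z_j)}^{-1}\le\delta^{-1}$. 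Writing $\inp{f}{g_j}=\overline{c_j}\,(M_{B_j}^*f)(z_j)$ with $\mod{c_j}^2=(1-\mod{z_j}^2)\mod{B_j(z_j)}^{-2}\le(1-\mod{z_j}^2)\delta^{-2}$, the Bessel property of $\{g_j\}$ would follow from
\[
\sum_j\mod{\inp{f}{g_j}}^2\le\delta^{-2}\sum_j(1-\mod{z_j}^2)\,\mod{(M_{B_j}^*f)(z_j)}^2\le C\norm f^2 ,
\]
and then biorthogonality yields $\sum_j\mod{\alpha_j}^2=\sum_j\mod{\inp{\sum_i\alpha_i\widetilde{k}_{z_i}}{g_j}}^2\le C\norm{\sum_i\alpha_i\widetilde{k}_{z_i}}^2$, the desired lower bound.

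The hard part will be the last displayed inequality: $(C)$ must not only make $\mu$ a Carleson measure but control the $j$-dependent functions $M_{B_j}^*f$ uniformly, and this uniform Hardy-space estimate is the genuine analytic content of the Shapiro--Shields argument — everything else is soft Hilbert-space and $H^2$ bookkeeping. An alternative packaging that isolates exactly this difficulty is to note that $\{\widetilde{k}_{z_i}\}$ is a Riesz basic sequence if and only if $F\colon H^2\to\ell^2$ is bounded and surjective, i.e. if and only if $\{z_i\}$ is an interpolating sequence for $H^2$, and then to invoke Carleson's interpolation theorem; this merely relocates the difficulty into the construction used in Carleson's proof.
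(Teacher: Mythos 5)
First, a point of comparison: the paper does not prove this statement at all --- Theorem \ref{NP} is quoted as a classical result of Shapiro and Shields, in the reformulation of Nikolski and Pavlov, so there is no in-paper argument to measure yours against. Judged on its own terms, your necessity direction (Riesz basic sequence $\Rightarrow (C)$) is complete and correct: the bound $\norm{u_j}^2\ge A$ from the lower Riesz inequality, the biorthogonal vector $g_j=u_j/\norm{u_j}^2$, the division of $g_j$ by the Blaschke product $B_j$ with the $H^2$ norm preserved, and the evaluation $\mod{h_j(z_j)}\le\norm{h_j}\,\norm{k_{z_j}}$ assemble into a clean quantitative version of exactly the device the paper uses qualitatively in the proof of Theorem \ref{fr}.

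The sufficiency direction, however, contains a genuine gap, and it sits precisely where you say it does. A uniformly bounded biorthogonal system is worthless by itself --- every minimal sequence whose distances $\norm{u_j}$ are bounded below admits one, and minimality plus boundedness of the biorthogonal family does not imply the lower Riesz bound; what you actually need is that $\{g_j\}$ is a \emph{Bessel} sequence, i.e.\ your displayed estimate $\sum_j(1-\mod{z_j}^2)\mod{(M_{B_j}^*f)(z_j)}^2\le C\norm{f}^2$ with $C$ depending only on $\delta$. Because the function $M_{B_j}^*f$ varies with $j$, this does not follow from the Carleson embedding for $\mu=\sum_i(1-\mod{z_i}^2)\delta_{z_i}$ (which you also defer to the classics); it is exactly the analytic core of the Shapiro--Shields/Carleson argument, and you have not supplied it. So as a self-contained proof the converse is incomplete; as a proof by citation (``$(C)$ implies $\{z_i\}$ is interpolating, hence $F$ is bounded and onto, hence $F^*$ is an isomorphism onto its range'') it is valid but amounts to restating the theorem the paper itself is quoting. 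To close the gap you would need the Shapiro--Shields duality estimate or Carleson's construction of the interpolating functions.
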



In the late 70's, independent of the work of Nikolski and Pavlov, McKenna was also studying 
kernel functions. In \cite{McK} McKenna proved some partial converses to Shapiro and Shields results 
\cite{SS} and thereby brought some more insight to the area. In 
particular, he proved 
the following interesting result:

\begin{thm}\label{mck} Let $\{\widetilde{k}_{z_i}\}_{i\in \bb N}$ be a Bessel sequence of normalized kernel functions 
in $H^2.$ Then $\{z_i\}_{i\in \bb N}$ can be partitioned into finitely many subsequences each of which 
satisfies the condition $(C).$ 
\end{thm}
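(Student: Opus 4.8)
To prove this, the plan is to distil the Bessel hypothesis into a single metric inequality and then run a bounded-degree colouring argument. Write $\rho(z,w)=\mod{\frac{z-w}{1-\bar z w}}$ for the pseudohyperbolic distance on $\bb D$. I would start from the elementary identity, obtained by a direct computation with the reproducing kernel of $H^2$,
\begin{equation*}
\mod{\inp{\widetilde{k}_{z_i}}{\widetilde{k}_{z_j}}}^2 \;=\; \frac{(1-\mod{z_i}^2)(1-\mod{z_j}^2)}{\mod{1-\bar z_i z_j}^2} \;=\; 1-\rho(z_i,z_j)^2 ,
\end{equation*}
and then substitute $x=\widetilde{k}_{z_j}$ into the Bessel inequality $\sum_i\mod{\inp{x}{\widetilde{k}_{z_i}}}^2\le B\norm{x}^2$; peeling off the $i=j$ term gives the key bound
\begin{equation*}
\sum_{i\ne j}\bigl(1-\rho(z_i,z_j)^2\bigr)\;\le\;B-1\qquad\text{for every }j.
\end{equation*}

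Everything would then follow from this one inequality. Fix a radius $r\in(0,1)$, say $r=\tfrac12$: any summand with $\rho(z_i,z_j)<r$ exceeds $1-r^2$, so for each $j$ there are at most $\lfloor(B-1)/(1-r^2)\rfloor$ indices $i\ne j$ with $\rho(z_i,z_j)<r$. I would form the graph on $\bb N$ joining $i$ and $j$ whenever $\rho(z_i,z_j)<r$; it has bounded degree, so a greedy colouring — processing the vertices $1,2,3,\dots$ in order and giving each the least colour missed by its already-coloured neighbours — uses at most $N:=\lfloor(B-1)/(1-r^2)\rfloor+1$ colours and yields a partition $\bb N=S_1\sqcup\cdots\sqcup S_N$ in which any two distinct indices lying in a common block $S_k$ satisfy $\rho(z_i,z_j)\ge r$. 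In particular the points $\{z_i:i\in S_k\}$ are pairwise distinct and pseudohyperbolically $r$-separated.

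It then remains to check that each block satisfies condition $(C)$. Fix $k$ and $j\in S_k$, and for $i\in S_k$ with $i\ne j$ put $c_{ij}:=1-\rho(z_i,z_j)^2$, so $0\le c_{ij}\le 1-r^2<1$. Using the elementary convexity estimate $-\log(1-t)\le C_r\,t$ valid on $[0,1-r^2]$, where $C_r:=\frac{-\log(r^2)}{1-r^2}$, one gets
\begin{equation*}
-\log\prod_{\substack{i\in S_k\\ i\ne j}}\rho(z_i,z_j)\;=\;-\tfrac12\sum_{\substack{i\in S_k\\ i\ne j}}\log(1-c_{ij})\;\le\;\tfrac{C_r}{2}\sum_{\substack{i\in S_k\\ i\ne j}}c_{ij}\;\le\;\tfrac{C_r}{2}\sum_{i\ne j}\bigl(1-\rho(z_i,z_j)^2\bigr)\;\le\;\tfrac{C_r}{2}(B-1),
\end{equation*}
so $\prod_{i\in S_k,\,i\ne j}\rho(z_i,z_j)\ge\exp\bigl(-\tfrac{C_r}{2}(B-1)\bigr)=:\delta>0$ uniformly in $j$ and $k$; hence each of the finitely many subsequences $\{z_i:i\in S_k\}$ satisfies $(C)$.

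I do not expect a single serious obstacle. The one point that needs the right idea is the first one: testing the Bessel inequality against the kernel functions themselves, which collapses the analytic hypothesis to the purely metric statement $\sum_{i\ne j}(1-\rho(z_i,z_j)^2)\le B-1$. After that, the separation step is a routine bounded-degree colouring, and the passage from a bound on $\sum(1-\rho^2)$ to a lower bound on $\prod\rho$ is the standard calculus estimate — that is precisely the place where the $r$-separation produced by the colouring is needed, since it keeps the $c_{ij}$ away from $1$. (One could instead invoke Carleson's interpolation theorem for the last paragraph — a separated sequence is interpolating as soon as $\sum_i(1-\mod{z_i}^2)\delta_{z_i}$ is a Carleson measure, and any sub-sum of a Carleson measure is again one — but the direct estimate above is self-contained.)
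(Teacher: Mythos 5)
The paper does not actually prove this statement --- it is quoted as McKenna's theorem \cite{McK} (with an alternative proof attributed to Nikolski \cite{Nik1}) --- so there is no in-paper argument to compare against. Your proof is correct and complete as written: the identity $\mod{\inp{\widetilde{k}_{z_i}}{\widetilde{k}_{z_j}}}^2=1-\rho(z_i,z_j)^2$ is the standard computation with the Szeg\H{o} kernel; testing the Bessel bound on $\widetilde{k}_{z_j}$ does give $\sum_{i\ne j}(1-\rho(z_i,z_j)^2)\le B-1$ uniformly in $j$; the bounded-degree greedy colouring legitimately produces finitely many pseudohyperbolically $r$-separated blocks (and in particular disposes of any repeated points, which Bessel already forces to occur with bounded multiplicity); and the convexity estimate $-\log(1-t)\le C_r t$ on $[0,1-r^2]$ converts the summability into the uniform lower bound on the Blaschke-type product required by $(C)$. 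This is essentially the classical route (separation plus a discrete Carleson-type condition), presented in a self-contained way; I see no gap.
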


Nikolski gave a completely different proof of the above theorem which he included in \cite{Nik1}. 
The FC motivated  Nikolski to combine the above two results as follows: 

\begin{thm}\label{NM}Every Bessel sequence $\{\widetilde{k}_{z_i}\}_{i\in \bb N}$ of normalized kernel 
functions in $H^2$ can be partitioned into finitely many Riesz basic sequences.
\end{thm}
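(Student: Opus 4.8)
The plan is to obtain this statement by simply combining the two results just recalled, Theorem~\ref{mck} of McKenna and Theorem~\ref{NP} of Nikolski and Pavlov; no additional machinery is required, so the argument is short.

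First, starting from a Bessel sequence $\{\widetilde{k}_{z_i}\}_{i\in\bb N}$ of normalized kernel functions in $H^2$, I would apply Theorem~\ref{mck} to the associated sequence of points $\{z_i\}_{i\in\bb N}$. This produces a partition of the index set $\bb N$ into finitely many subsets $J_1,\dots,J_n$ with the property that, for each $k$, the subsequence $\{z_i:i\in J_k\}$ satisfies Carleson's condition $(C)$: there is a constant $\delta_k>0$ such that $\prod_{i\ne j,\ i,j\in J_k}\left|\frac{z_i-z_j}{1-\bar{z_i}z_j}\right|\ge\delta_k$ for every $j\in J_k$.

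Next, I would apply Theorem~\ref{NP} to each of these finitely many subsequences separately. Since $\{z_i:i\in J_k\}$ satisfies $(C)$, that theorem tells us that $\{\widetilde{k}_{z_i}:i\in J_k\}$ is a Riesz basic sequence in $H^2$. Because the sets $J_1,\dots,J_n$ partition $\bb N$, the families $\{\widetilde{k}_{z_i}:i\in J_k\}$ for $k=1,\dots,n$ constitute a partition of $\{\widetilde{k}_{z_i}\}_{i\in\bb N}$ into $n$ Riesz basic sequences, which is precisely the assertion.

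The one point that warrants a moment's care is the compatibility of the two theorems: the instance of condition $(C)$ supplied by Theorem~\ref{mck} for a subset $J_k$ must be exactly the hypothesis of Theorem~\ref{NP} for the subsequence indexed by $J_k$. This is immediate, since $(C)$ is stated purely in terms of the pseudohyperbolic (Blaschke) products over whichever index set is under consideration, and passing from $\bb N$ to $J_k$ merely restricts the index set appearing in those products. Consequently there is no genuine obstacle at this stage; all of the real work lies in the proofs of Theorems~\ref{mck} and~\ref{NP} — McKenna's partitioning of $\{z_i\}$ into finitely many $(C)$-subsequences and the Nikolski--Pavlov characterization of Riesz basic sequences of normalized kernel functions in $H^2$ — and the content of Theorem~\ref{NM} is only to package them together, which in turn is what motivates seeking the converse pursued in the remainder of the paper.
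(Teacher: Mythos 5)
Your proof is correct and is exactly the argument the paper intends: Theorem~\ref{NM} is presented there precisely as the combination of McKenna's partition result (Theorem~\ref{mck}) with the Nikolski--Pavlov characterization (Theorem~\ref{NP}), applied to each block of the partition. There is nothing further to add.
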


Thus, Theorem \ref{NM} shows that $H^2$ satisfies the FCKF.  This introduced methods from 
reproducing kernel Hilbert space theory to the FC.


\section{Preliminary Results}


We begin by recording a few elementary observations that we shall use 
later. Recall that a sequence $\{f_i\}_{i\in J},$ in a Hilbert space 
$\cl H$ is a Bessel sequence iff 
$FF^*=(\inp{f_j}{f_i})\in B(\ell^2(J)),$ that is iff its Grammian is bounded.
The following give characterization of other properties that we shall need in terms of Grammians. 

\begin{prop}\label{riesz1}Let $\{f_i\}_{i\in J}\subseteq \cl H.$ Then $\{f_i\}_{i\in J}$ is a 
Riesz basis for $\cl H$ iff it is 
a Bessel sequence with closed linear span equal to $\cl H,$ and there exists a constant $c>0$ 
such that $FF^*\ge cI_{\ell^2(J)}.$
\end{prop}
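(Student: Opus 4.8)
The plan is to characterize the Riesz basis property entirely through the analysis operator $F$ and then translate statements about $F$ into statements about $FF^*$. Recall from observation (iii) in the introduction that $\{f_i\}_{i\in J}$ is a Riesz basic sequence iff $F$, viewed as a map from $\overline{\span\{f_i\}}$ to $\ell^2(J)$, is invertible; and it is a Riesz basis for $\cl H$ precisely when, in addition, $\overline{\span\{f_i\}} = \cl H$. So the proposition reduces to showing that, given a Bessel sequence whose closed span is all of $\cl H$, the operator $F \in B(\cl H, \ell^2(J))$ is invertible if and only if $FF^* \ge cI_{\ell^2(J)}$ for some $c>0$.

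First I would prove the forward direction: if $\{f_i\}$ is a Riesz basis for $\cl H$, then $F$ is invertible, hence $F^*$ is invertible, hence $FF^*$ is an invertible positive operator on $\ell^2(J)$, and any invertible positive operator is bounded below by $c = \norm{(FF^*)^{-1}}^{-1} > 0$; this gives $FF^* \ge cI_{\ell^2(J)}$. For the converse, suppose $\{f_i\}$ is Bessel with $\overline{\span\{f_i\}} = \cl H$ and $FF^* \ge cI_{\ell^2(J)}$. The inequality $FF^* \ge cI$ immediately gives $\norm{F^*\xi}^2 = \inp{FF^*\xi}{\xi} \ge c\norm{\xi}^2$ for all $\xi \in \ell^2(J)$, so $F^*$ is bounded below, hence injective with closed range. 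To conclude $F^*$ is surjective (equivalently $F$ is invertible), I note that $\overline{\mathrm{ran}\, F^*} = (\ker F)^\perp$, and $\ker F = \{x : \inp{x}{f_i} = 0 \ \forall i\} = (\overline{\span\{f_i\}})^\perp = \cl H^\perp = \{0\}$; since $F^*$ also has closed range, $\mathrm{ran}\, F^* = \cl H$. Thus $F^*$ is a bounded bijection, so $F^*$ and therefore $F$ are invertible, and by observation (iii) together with the span hypothesis, $\{f_i\}$ is a Riesz basis for $\cl H$.

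There is really no serious obstacle here; the argument is a routine unwinding of the operator-theoretic dictionary already set up in the introduction. The one point requiring a little care is making sure the "closed linear span equals $\cl H$" hypothesis is used exactly where needed — namely to identify $\ker F$ with $\{0\}$ so that the bounded-below condition on $F^*$ upgrades to invertibility rather than just invertibility onto a proper subspace. I would also remark that boundedness of $F$ (the Bessel hypothesis) is what lets us speak of $FF^* \in B(\ell^2(J))$ and apply the closed-range theorem, so all three hypotheses in the statement are genuinely used.
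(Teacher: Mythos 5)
Your argument is correct and is exactly the routine operator-theoretic unwinding the paper has in mind: the paper states this proposition without proof as an elementary consequence of the analysis-operator dictionary (observation (iii) of the introduction), and your use of $FF^*\ge cI_{\ell^2(J)}$ to get $F^*$ bounded below, plus the span hypothesis to identify $\ker F=\{0\}$ and upgrade closed range to surjectivity, is the standard completion of that argument. No gaps.
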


\begin{prop}\label{riesz2}A sequence $\{f_i\}_{i\in J}$  in a Hilbert 
space $\cl H$ is a Riesz basic sequence iff it is 
a Bessel sequence and there exists a constant $c>0$ such that $FF^*\ge cI_{\ell^2(J)}$ 
\end{prop}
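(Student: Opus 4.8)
The plan is to reduce this to the previous proposition by restricting attention to the closed span. Write $\cl H_0 = \overline{\span\{f_i : i \in J\}}$. By definition, $\{f_i\}_{i\in J}$ is a Riesz basic sequence in $\cl H$ precisely when it is a Riesz basis for $\cl H_0$. So I would first observe that the hypothesis of Proposition \ref{riesz1} applied to $\cl H_0$ in place of $\cl H$ is exactly what we want to match against the stated condition. The one point that needs care is that the analysis operator changes when we change the ambient space: let $F : \cl H \to \ell^2(J)$ be the analysis operator for $\{f_i\}$ viewed in $\cl H$, and let $F_0 : \cl H_0 \to \ell^2(J)$ be the analysis operator viewed in $\cl H_0$.

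The key step is to check that $F F^* = F_0 F_0^*$ as operators on $\ell^2(J)$. This is immediate from the formula recorded in the excerpt: both Grammians equal $(\inp{f_j}{f_i})_{i,j\in J}$, since the inner products $\inp{f_j}{f_i}$ do not depend on whether we compute them in $\cl H$ or in the subspace $\cl H_0$ (the inner product on $\cl H_0$ is the restriction of that on $\cl H$, and all the $f_i$ lie in $\cl H_0$). Consequently the condition ``$\{f_i\}$ is Bessel and $\exists c>0$ with $F F^* \ge c I_{\ell^2(J)}$'' is literally the same statement whether phrased via $F$ or via $F_0$; boundedness of $F$ is likewise equivalent to boundedness of $F_0$ since $F = F_0$ composed with the isometric inclusion $\cl H_0 \hookrightarrow \cl H$ on the adjoint side, or more simply since $\norm{F x} = \norm{F_0 (P_{\cl H_0} x)} \le \norm{F_0} \norm{x}$ and $F$ restricted to $\cl H_0$ is $F_0$.

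With that identification in hand, the proof is a one-line application of Proposition \ref{riesz1}: $\{f_i\}_{i\in J}$ is a Riesz basic sequence $\iff$ it is a Riesz basis for $\cl H_0$ $\iff$ (by Proposition \ref{riesz1} with $\cl H_0$ as the space) it is a Bessel sequence in $\cl H_0$ whose closed linear span is $\cl H_0$ and $F_0 F_0^* \ge c I_{\ell^2(J)}$ for some $c>0$. The span condition is automatic by the definition of $\cl H_0$, and by the previous paragraph the remaining two conditions transfer verbatim to $F$. I do not anticipate a real obstacle here; the only thing to be slightly careful about is the bookkeeping between $F$ and $F_0$ and making explicit that ``Bessel sequence'' is an intrinsic notion (depending only on the Grammian, hence only on the vectors $\{f_i\}$ themselves, not on the ambient space), which is exactly item (i) in the bulleted list in the excerpt.
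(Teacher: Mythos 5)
Your proof is correct: the paper states Proposition \ref{riesz2} without proof as an elementary observation, and your reduction to Proposition \ref{riesz1} via the closed span $\cl H_0$, together with the remark that the Grammian $(\inp{f_j}{f_i})$ is intrinsic to the vectors and hence unchanged when passing from $\cl H$ to $\cl H_0$, is exactly the intended argument. The bookkeeping between $F$ and $F_0$ is handled properly, so there is nothing to add.
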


Thus, we get the following reformulation of the FC. 

\begin{prop}\label{FC-equi}A Bessel sequence $\{f_i\}_{i\in J}$ can be partitioned into $n$ 
Riesz basic sequences iff there exist a partition 
$A_1,\dots,A_n$ of $J$ and constants $c_1,\dots,c_n>0$
such that $ P_{A_i}FF^*P_{A_i}\ge c_iP_{A_i}$ for all $ 1\le i \le n.$
\end{prop}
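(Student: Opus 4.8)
The plan is to reduce the statement to Proposition~\ref{riesz2} applied separately to each piece of the partition. First I would observe that any subcollection of a Bessel sequence is again a Bessel sequence, so for every partition $A_1,\dots,A_n$ of $J$ each subfamily $\{f_i\}_{i\in A_k}$ is automatically Bessel; hence, by Proposition~\ref{riesz2}, $\{f_i\}_{i\in A_k}$ is a Riesz basic sequence precisely when its Grammian is bounded below by a positive multiple of the identity on $\ell^2(A_k)$.

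The key step is to identify the Grammian of $\{f_i\}_{i\in A_k}$ with the compression of $FF^*$ to $\ell^2(A_k)$. Writing $F_k\colon\cl H\to\ell^2(A_k)$ for the analysis operator of $\{f_i\}_{i\in A_k}$, one has $F_k=P_{A_k}F$ once $\ell^2(A_k)$ is regarded as a subspace of $\ell^2(J)$, so $F_kF_k^*$, viewed as an operator on $\ell^2(A_k)$, equals the compression of $P_{A_k}FF^*P_{A_k}$; equivalently, in matrix terms $F_kF_k^*=(\inp{f_j}{f_i})_{i,j\in A_k}$ is the principal submatrix of $FF^*=(\inp{f_j}{f_i})_{i,j\in J}$ indexed by $A_k$.

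Next I would translate the lower bound. The inequality $F_kF_k^*\ge c_kI_{\ell^2(A_k)}$ on $\ell^2(A_k)$ is equivalent to $P_{A_k}FF^*P_{A_k}\ge c_kP_{A_k}$ on all of $\ell^2(J)$: both operators $P_{A_k}FF^*P_{A_k}$ and $c_kP_{A_k}$ annihilate $\ell^2(J)\ominus\ell^2(A_k)$ and leave $\ell^2(A_k)$ invariant, on which they restrict to $F_kF_k^*$ and $c_kI_{\ell^2(A_k)}$ respectively, so a quadratic-form comparison on $\ell^2(J)$ reduces to one on $\ell^2(A_k)$. Combining with the previous paragraph, $\{f_i\}_{i\in A_k}$ is a Riesz basic sequence iff there is $c_k>0$ with $P_{A_k}FF^*P_{A_k}\ge c_kP_{A_k}$.

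Finally, taking the conjunction of this equivalence over $k=1,\dots,n$ yields the proposition: $\{f_i\}_{i\in J}$ partitions into $n$ Riesz basic sequences along $A_1,\dots,A_n$ iff there exist $c_1,\dots,c_n>0$ with $P_{A_i}FF^*P_{A_i}\ge c_iP_{A_i}$ for every $i$. I expect no serious obstacle; the only point requiring care is the bookkeeping in the middle two paragraphs, namely identifying the subsequence's analysis operator and Grammian with the corresponding compressions of $F$ and $FF^*$, and remaining consistent about whether a given operator is regarded as acting on $\ell^2(A_k)$ or on $\ell^2(J)$.
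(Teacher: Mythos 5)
Your argument is correct and is exactly the route the paper intends: the paper states this proposition without proof as an immediate consequence of Proposition~\ref{riesz2}, and your write-up simply supplies the (routine but worth recording) bookkeeping that identifies the Grammian of each subfamily $\{f_i\}_{i\in A_k}$ with the compression $P_{A_k}FF^*P_{A_k}$ and translates the lower bound between $\ell^2(A_k)$ and $\ell^2(J)$.
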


\indent From now on, whenever a sequence in a Hilbert space can be partitioned into finitely 
many Riesz basic sequences we will say that it satisfies the FC. 

\begin{prop}\label{r_equi}Let $\{f_i\}_{i\in J}\subseteq \cl H$ and 
$\{g_i\}_{i\in J}\subseteq \cl K$, $J\subseteq \bb N$ be 
two sequences such that $$(\langle{f_j,f_i}\rangle)=D(\langle{g_j,g_i}\rangle) D^*,$$ where $D$ is 
an invertible, diagonal operator in $B(\ell^2(J)).$ Then: 
\begin{itemize}
\item[(i)]$\{f_i\}_{i\in J}$ is a Bessel sequence iff $\{g_i\}_{i\in J}$ is a Bessel sequence,  
\item[(ii)]$\{f_i\}_{i\in J }$ is a frame sequence iff $\{g_i\}_{i\in J}$ is a frame sequence,
\item[(iii)]$\{f_i\}_{i\in J}$ satisfies the FC iff $\{g_i\}_{i\in J}$ satisfies the FC.
\end{itemize}
\end{prop}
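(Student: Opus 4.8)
The plan is to reduce everything to the Grammian reformulations established in Propositions~\ref{riesz2} and~\ref{FC-equi}, so that the diagonal congruence $(\langle f_j,f_i\rangle) = D(\langle g_j,g_i\rangle)D^*$ can be exploited directly at the level of operators on $\ell^2(J)$. Write $G_f = (\langle f_j,f_i\rangle) = FF^*$ and $G_g = (\langle g_j,g_i\rangle) = GG^*$ for the two Grammians, so the hypothesis reads $G_f = D G_g D^*$ with $D$ invertible and diagonal. Since $D$ is diagonal and invertible, there are constants $0 < m \le M < \infty$ with $m I \le D^*D \le M I$ (and likewise for $DD^*$, which equals $D^*D$ up to reindexing — in fact for a diagonal operator $D^*D = DD^* = \operatorname{diag}(|d_i|^2)$), and $D$ commutes with every projection $P_A$ for $A \subseteq J$.

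For (i): $G_f$ is bounded iff $G_g$ is bounded, because $\|G_f\| = \|D G_g D^*\| \le \|D\|^2 \|G_g\|$ and symmetrically $\|G_g\| \le \|D^{-1}\|^2 \|G_f\|$; by observation~(i) after the definition of the Grammian this is exactly the equivalence of the Bessel property. For (ii): a Bessel sequence $\{f_i\}$ is a frame sequence iff $G_f$ is bounded below on its range, equivalently iff there is $c>0$ with $G_f \ge c\,(\text{projection onto } \overline{\operatorname{ran}\,G_f})$; since $D$ is invertible the ranges of $G_f = DG_gD^*$ and $G_g$ have the same closure is \emph{not} automatic, so instead I would argue spectrally: $G_f \ge c G_g$-type bounds. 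Concretely, $\langle G_f \xi, \xi\rangle = \langle G_g D^*\xi, D^*\xi\rangle$, and as $\xi$ ranges over $\ell^2(J)$ so does $D^*\xi$ (bijectively, with norm distortion controlled by $m,M$); hence $G_f$ has a strictly positive lower bound on all of $\ell^2(J)$ iff $G_g$ does, and more generally $G_f$ and $G_g$ have the same property of ``being bounded below modulo a kernel''. The cleanest route is: $\ker G_f = \ker G_g$ (since $G_f\xi = 0 \iff G_gD^*\xi = 0 \iff D^*\xi \in \ker G_g$, and one checks $D^*$ preserves $\ker G_g$ because... ) — here a small care is needed, so I would instead use Proposition~\ref{riesz2}-style local statements and the identity $P_A G_f P_A = D_A (P_A G_g P_A) D_A^*$ where $D_A = P_A D$, which lets me transfer lower bounds $P_A G_g P_A \ge c P_A \iff P_A G_f P_A \ge c' P_A$ with $c' = c/\|D_A^{-1}\|^2$ and back, uniformly.

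For (iii), which I expect to be the substantive point but which now follows with no extra work: by Proposition~\ref{FC-equi}, $\{f_i\}$ satisfies the FC iff there is a finite partition $A_1,\dots,A_n$ of $J$ and constants $c_k > 0$ with $P_{A_k} G_f P_{A_k} \ge c_k P_{A_k}$. Using $P_{A_k} D = D P_{A_k}$ we get $P_{A_k} G_f P_{A_k} = P_{A_k} D G_g D^* P_{A_k} = (D P_{A_k}) G_g (D P_{A_k})^* = D\,(P_{A_k} G_g P_{A_k})\, D^*$, so for $\xi \in \ell^2(A_k)$,
\[
\langle P_{A_k} G_f P_{A_k}\,\xi,\xi\rangle = \langle P_{A_k} G_g P_{A_k}\,(D^*\xi),(D^*\xi)\rangle,
\]
and since $\|D^*\xi\|^2 \ge m\|\xi\|^2$ and $\le M\|\xi\|^2$ on $\ell^2(A_k)$, the inequality $P_{A_k} G_f P_{A_k} \ge c_k P_{A_k}$ is equivalent to $P_{A_k} G_g P_{A_k} \ge c_k' P_{A_k}$ with $c_k' = c_k/M$ (and the reverse direction with $c_k/m$). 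Thus the \emph{same} partition witnesses the FC for $\{f_i\}$ and for $\{g_i\}$; invoking Proposition~\ref{FC-equi} again in the other direction completes (iii). The only real obstacle is bookkeeping: making sure the diagonal operator commutes with the coordinate projections (immediate) and that its invertibility gives two-sided uniform bounds $m,M$ (immediate since $D = \operatorname{diag}(d_i)$ with $\inf_i|d_i| > 0$ and $\sup_i|d_i| < \infty$, both forced by $D, D^{-1} \in B(\ell^2(J))$). No genuinely hard estimate is needed; everything is a congruence argument on $\ell^2(J)$.
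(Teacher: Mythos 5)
Your approach is the same as the paper's: everything is read off from the Grammian congruence $G_f = D G_g D^*$ via the Grammian characterizations of Bessel sequences, Riesz basic sequences (Proposition~\ref{riesz2}), and the FC (Proposition~\ref{FC-equi}). Parts (i) and (iii) are complete and correct; in particular, the observation that $D$ commutes with every coordinate projection $P_{A_k}$, so that the \emph{same} partition witnesses the FC for both sequences, is exactly the point. The one loose end is (ii), and you flag it yourself: since $\ker G_f = (D^*)^{-1}(\ker G_g)$ need not equal $\ker G_g$, the assertion that $G_f$ and $G_g$ ``have the same property of being bounded below modulo a kernel'' is stated rather than proved --- the substitution $\xi \mapsto D^*\xi$ does not carry $(\ker G_f)^\perp$ onto $(\ker G_g)^\perp$, and the $P_A$-localization you fall back on is relevant to (iii), not to the frame property. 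The clean way to close this (and what the paper's one-line proof of (ii) implicitly uses) is to recast ``bounded below on the orthogonal complement of the kernel'' as ``closed range'': writing $G_f = \bigl(DG_g^{1/2}\bigr)\bigl(DG_g^{1/2}\bigr)^*$, one has $\mathrm{ran}(G_f)$ closed iff $\mathrm{ran}\bigl(DG_g^{1/2}\bigr) = D\bigl(\mathrm{ran}\,G_g^{1/2}\bigr)$ closed iff $\mathrm{ran}\bigl(G_g^{1/2}\bigr)$ closed iff $\mathrm{ran}(G_g)$ closed, since $D$ is a linear homeomorphism. With that one substitution, (ii) is done and the rest of your argument stands.
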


\begin{proof}Note that (i) and (iii) are immediate consequences of the
  Grammian characterizations of these properties. To prove (ii) note
  that a Bessel sequence $\{f_i\}_{i \in J}$ is a frame sequence iff the Grammian is
  bounded below on the orthogonal complement of its kernel.
 \end{proof}

While the following result is not necessary for the development of any
of our further results, it does serve to explain why we have chosen to
study the Bessel sequence version of the FC in this context instead of
the frame version. Given a bounded operator $T\in B(\cl H,\cl K)$ we shall denote the kernel of $T$ by $Ker(T).$ 

  
\begin{thm}\label{fr} A sequence $\{\widetilde{k}_{z_i}\}_{i\in J}$ of normalized kernel 
functions in $H^2$ is a frame sequence iff 
it is a Riesz basis basic sequence. Moreover, in this case there is no other kernel function 
in the closed linear span of $\{\widetilde{k}_{z_i}\}_{i\in J}.$
\end{thm}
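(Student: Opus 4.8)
The plan is to exploit the key structural fact about the Szegő kernel in $H^2$: for $z,w \in \bb D$, the normalized kernel functions satisfy $|\inp{\widetilde{k}_w}{\widetilde{k}_z}| = \sqrt{\frac{(1-|z|^2)(1-|w|^2)}{|1-\bar z w|^2}}$, which is the pseudohyperbolic quantity appearing in Carleson's condition (C). First I would reduce to the Grammian picture. By Proposition \ref{riesz2}, a Bessel sequence is a Riesz basic sequence exactly when its Grammian is bounded below, and by the remark following Proposition \ref{r_equi} together with Proposition \ref{FC-equi}, a frame sequence is one whose Grammian is bounded below on the orthogonal complement of its kernel. So the substantive claim is: if $\{\widetilde{k}_{z_i}\}_{i\in J}$ is a frame sequence but not a Riesz basic sequence, we derive a contradiction. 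Not being a Riesz basic sequence means the Grammian $G = (\inp{\widetilde{k}_{z_j}}{\widetilde{k}_{z_i}})$ has $0$ in its spectrum while being a frame sequence forces $\mathrm{Ker}(G) = \mathrm{Ker}(F^*)$ to be nontrivial; i.e. there is a nonzero square-summable $(\alpha_i)$ with $\sum_i \alpha_i \widetilde{k}_{z_i} = 0$ in $H^2$.

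The heart of the argument is to rule out such a nontrivial linear dependence. Here I would use Theorem \ref{mck} (McKenna): the Bessel sequence $\{\widetilde{k}_{z_i}\}$ can be partitioned into finitely many subsequences $\{z_i : i \in A_\ell\}$, $\ell = 1,\dots,n$, each satisfying (C), hence by Theorem \ref{NP} each $\{\widetilde{k}_{z_i}\}_{i \in A_\ell}$ is a Riesz basic sequence — in particular these subfamilies are \emph{linearly independent} with dense span in their closed linear spans, and a Riesz basic sequence has trivial kernel for its analysis operator. I then want to show that a relation $\sum_{i \in J} \alpha_i \widetilde{k}_{z_i} = 0$ with not all $\alpha_i = 0$ is impossible. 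The clean way: evaluate against kernel functions, or better, use that a function in $H^2$ vanishing means all its Taylor coefficients vanish, i.e. $\sum_i \alpha_i \overline{\widetilde{k}_{z_i}(0)} \, z_i^{\,m}/(\cdots)$ — actually more simply, $\sum_i \alpha_i \widetilde{k}_{z_i}(w) = 0$ for all $w \in \bb D$, an analytic identity. Since the points $z_i$ are distinct and the kernel functions $k_{z_i}(w) = (1-\bar z_i w)^{-1}$ have distinct poles at $w = 1/\bar z_i$, a finite subsum cannot vanish; for the infinite case I would pass to the limit carefully using the Bessel bound to control tails, concluding all $\alpha_i = 0$. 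This contradicts nontriviality of $\mathrm{Ker}(F^*)$, so $\mathrm{Ker}(F^*) = \{0\}$ and the frame sequence is a Riesz basic sequence.

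For the ``moreover'' clause: suppose $\widetilde{k}_w$ lies in $\overline{\span}\{\widetilde{k}_{z_i} : i \in J\}$ for some $w \in \bb D$ not among the $z_i$. Then $\{\widetilde{k}_w\} \cup \{\widetilde{k}_{z_i}\}_{i \in J}$ has the same closed span, and is still a Bessel sequence (adding one bounded vector), and still a frame sequence for that span — hence by the first part it is a Riesz basic sequence, hence linearly independent with the analysis operator injective. But $\widetilde{k}_w$ being in the closed span of the others means $\widetilde{k}_w = \sum_i \beta_i \widetilde{k}_{z_i}$ (the sum exists since a Riesz basic sequence has every element of the span given by a unique square-summable expansion), giving a nontrivial dependence $\widetilde{k}_w - \sum_i \beta_i \widetilde{k}_{z_i} = 0$ — contradicting injectivity. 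Alternatively and more directly, invoke the analytic-identity argument of the previous paragraph: $\widetilde{k}_w(\zeta) - \sum_i \beta_i \widetilde{k}_{z_i}(\zeta) = 0$ for all $\zeta \in \bb D$ is impossible by the distinct-poles reasoning since $w \neq z_i$ for all $i$.

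The main obstacle I anticipate is the passage from finite to infinite sums in the linear-independence step: one must ensure that a convergent series $\sum_i \alpha_i \widetilde{k}_{z_i} = 0$ in $H^2$ really does force every coefficient to vanish, which is where the Bessel property (uniform control, $\|\widetilde{k}_{z_i}\| = 1$) and the separation of the poles $1/\bar z_i$ must be combined — a bare analyticity argument handles finite sums but needs the frame/Bessel structure to be pushed to the infinite setting. An alternative that sidesteps this is purely operator-theoretic: combine Proposition \ref{r_equi} with the McKenna partition to write $G$ as bounded below on each diagonal block, and argue that a frame sequence's Grammian, which is a finite ``sum'' of blockwise-invertible pieces overlapping only off-diagonally, cannot have kernel — but making that precise is essentially the same difficulty in disguise, so I would favor the analytic route.
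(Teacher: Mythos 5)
Your reduction (frame $\Rightarrow$ $F^*$ onto, so it suffices to show $\mathrm{Ker}(F^*)=\{0\}$) and your appeal to Theorem \ref{mck} match the paper exactly, and your argument for the ``moreover'' clause (adjoin $\widetilde{k}_w$, note the enlarged system is still a frame for the same span, apply the first part to get $\omega$-independence) is a correct variant of the paper's, which instead uses the full Blaschke product $B$ with zeros at all the $z_i$ and observes $B\perp\overline{\span}\{k_{z_i}\}$ while $B(w)\neq 0$. The gap is in the heart of the first part: you never actually prove that $\sum_i\alpha_i\widetilde{k}_{z_i}=0$ forces all $\alpha_i=0$. The ``distinct poles'' argument handles only finite sums, and your proposed fix --- ``pass to the limit carefully using the Bessel bound to control tails'' --- cannot be made to work as stated, because the statement you are trying to prove is \emph{false} without the Blaschke condition: if $\{z_i\}$ fails the Blaschke condition, then every $k_{z_j}$ lies in the closed span of the remaining kernel functions (the only $H^2$ function vanishing on a non-Blaschke set is $0$), so nontrivial $\ell^2$-relations do exist. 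Hence no amount of tail control on a bare analyticity argument can succeed; the Bessel hypothesis must enter through a structural consequence, not as a bound on remainders.

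The missing idea, which is exactly what the paper supplies, is to convert McKenna's partition into the Blaschke condition for the whole sequence $\{z_i\}_{i\in J}$ (a finite union of $(C)$-sequences is Blaschke) and then to build an explicit dual system: for each $j$, the Blaschke product $f_j$ with zero set $\{z_i: i\neq j\}$ lies in $H^2$, and pairing the relation $\sum_i\lambda_i\widetilde{k}_{z_i}=0$ against $f_j$ gives $\sum_i\bar{\lambda}_i f_j(z_i)/\|k_{z_i}\|=\bar{\lambda}_j f_j(z_j)/\|k_{z_j}\|=0$, whence $\lambda_j=0$ since $f_j(z_j)\neq 0$. This one step replaces both your analytic-continuation argument and your blockwise operator-theoretic alternative (which, as you note yourself, is the same difficulty in disguise: Riesz basicity of each McKenna block gives independence within blocks but says nothing across blocks). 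With the Blaschke-product step inserted, the rest of your outline goes through.
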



\begin{proof}Let $\cl H$ be the closed linear span of $\{\widetilde{k}_{z_i}\}_{i\in J}$ in $H^2.$ 
If $\{\widetilde k_{z_i}\}_{i\in J}$ is a Riesz basis for $\cl H,$ then clearly it is a frame for $\cl H$. 
To prove the converse, suppose $\{\widetilde k_{z_i}\}_{i\in J}$ is a frame 
for $\cl H$. Then the analysis operator $F:\cl H \rightarrow \ell^2(J),$ given by 
$F(x)=(\langle{x, \widetilde k_{z_i}}\rangle)$ is bounded and $F^*$ is onto. 
Hence, to prove $\{\widetilde k_{z_i}\}_{i\in J}$ is a Riesz basis, it is enough to prove that $F^*$ is  one-to-one.  

To this end, let $\{\lambda_i\}_{i\in J}\in Ker(F^*)$. 
Then $\sum_{i\in J}{\lambda_i \widetilde k_{z_i}}=0,$ which implies that 
$\langle f,\sum_i{\lambda_i \widetilde k_{z_i}}\rangle =0$  for all $f\in H^2,$ which further implies 
that $\sum_{i\in J}{\bar{\lambda}_i\frac{f(z_i)}{\|k_{z_i}\|}}=0$  for all $f\in H^2.$

Note that, $\{\widetilde k_{z_i}\}_{i\in J}$ is a frame and so is a Bessel sequence. Therefore, by 
Theorem~\ref{mck}, $\{z_i\}_{i\in J}$ is a finite union of sets
satisfying (C) and hence satisfies the Blaschke condition. 
Let $f_j$ denote the Blaschke product with zeroes at $\{z_i:i\ne j\}$. 
Then each $f_j$ is in $\ H^2$ and so 
$\sum_{i\in J}{\bar{\lambda}_i}\frac{f_j(z_i)}{\|k_{z_i}\|}=0$
for all $j\in J.$ This forces, $\lambda_j=0$ for all $j\in J.$
Thus, $Ker(F^*)=0$. So, $F^*$ is an invertible operator and 
hence $\{\widetilde k_{z_i}\}_{i\in J}$ is a Riesz basis for $\cl H$.

The moreover part follows by observing that since $\{z_i\}_{i \in J}$
is a Blaschke sequence, the corresponding Blaschke product $B \in H^2$ is
orthogonal to the span of the kernel functions and $B(w) \ne 0$ at any
point not in the set.
\end{proof}

\section{Proof Of The Main Theorem}

The following theorem is a stepping-stone to our main result. 


\begin{thm}\label{st}Fix a sequence $\{z_i\}_{i\in \bb N}$ in $\bb D$ so that $\{\widetilde{k}_{z_i}\}_{i\in \bb N}$ 
is a frame sequence in $H^2.$ Let $Q\in B(\ell^2)$ be a positive operator such that there exists a constant $\delta>0$ with   
$\langle{Q e_i,e_i}\rangle\ge\delta$ for each $i.$ Then there exists a positive operator $P\in B(H^2)$ 
such that $$Q=\left({\langle{P\widetilde{k}_{z_j},P\widetilde{k}_{z_i}}\rangle}\right)$$ 
with $\|P\widetilde{k}_{z_i}\|^2\ge \delta$ for all  $i.$ 
\end{thm}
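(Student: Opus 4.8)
The plan is to manufacture $P$ by transporting $Q$ from $\ell^2$ onto the span of the kernel functions via the (invertible) analysis operator. Write $\cl M = \overline{\span\{\widetilde{k}_{z_i} : i \in \bb N\}} \subseteq H^2$ and let $F : \cl M \to \ell^2$ be the analysis operator of $\{\widetilde{k}_{z_i}\}_{i\in\bb N}$, so that $F^* : \ell^2 \to \cl M$ satisfies $F^* e_i = \widetilde{k}_{z_i}$. By hypothesis $\{\widetilde{k}_{z_i}\}_{i\in\bb N}$ is a frame sequence in $H^2$, so Theorem~\ref{fr} upgrades it to a Riesz basic sequence, i.e.\ a Riesz basis for $\cl M$; by characterization (iii) of Riesz basic sequences this means $F$, hence also $F^*$, is invertible. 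This invertibility is the one genuinely essential ingredient, and the reason the statement is phrased for frame sequences rather than merely Bessel sequences.

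Next I would set $T = F^{-1} Q (F^{-1})^{*} \in B(\cl M)$. Since $Q \ge 0$ we have $\inp{Tx}{x} = \inp{Q(F^{-1})^{*}x}{(F^{-1})^{*}x} \ge 0$ for all $x \in \cl M$, so $T$ is a positive bounded operator; let $P_0 = T^{1/2} \in B(\cl M)$, which is again positive and bounded. Now define $P \in B(H^2)$ to be $P_0$ on $\cl M$ and $0$ on $\cl M^{\perp}$. Writing $E$ for the orthogonal projection of $H^2$ onto $\cl M$, we have $\inp{Ph}{h} = \inp{P_0 (Eh)}{Eh} \ge 0$ for every $h \in H^2$ (since $P_0(Eh)\in\cl M$ is orthogonal to $(I-E)h$), so $P$ is a positive operator on $H^2$.

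It remains to verify the two required identities, which is now routine. Each $\widetilde{k}_{z_i}$ lies in $\cl M$, so $P\widetilde{k}_{z_i} = P_0\widetilde{k}_{z_i}$, and since $P_0$ is self-adjoint,
$$\inp{P\widetilde{k}_{z_j}}{P\widetilde{k}_{z_i}} = \inp{P_0^2\widetilde{k}_{z_j}}{\widetilde{k}_{z_i}} = \inp{T F^* e_j}{F^* e_i} = \inp{F T F^* e_j}{e_i}.$$
Hence the Grammian $\big(\inp{P\widetilde{k}_{z_j}}{P\widetilde{k}_{z_i}}\big)$ is the operator $F T F^* = (FF^{-1})\,Q\,\big((F^{-1})^{*}F^{*}\big) = Q$, using $(F^{-1})^{*}F^{*} = (FF^{-1})^{*} = I$. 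In particular $\norm{P\widetilde{k}_{z_i}}^2$ is the $i$-th diagonal entry $\inp{Q e_i}{e_i} \ge \delta$, so the norm bound comes for free. The only points needing a word of justification — that conjugating the positive operator $Q$ by the invertible $F^{-1}$ keeps it positive, and that extending $P_0$ by zero on $\cl M^\perp$ preserves positivity on all of $H^2$ — are both immediate, so there is no serious obstacle once Theorem~\ref{fr} is in hand.
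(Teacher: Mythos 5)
Your proposal is correct and follows essentially the same route as the paper: invert the analysis operator via Theorem~\ref{fr}, conjugate $Q$ to get a positive operator on the span of the kernel functions, take its square root, and extend by zero on the orthogonal complement. The only cosmetic difference is that you take the square root before extending by zero rather than after, which yields the same operator.
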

 
\begin{proof}Let $\cl H$ be the closed linear span of $\{\widetilde{k}_{z_i}\}_{i\in \bb N}$ 
in $H^2.$ Then, by Theorem \ref{fr}, $\{\widetilde{k}_i\}_{i\in \bb N}$ 
is a Riesz basis for $\cl H,$ since 
$\{\widetilde{k}_{z_i}\}_{i\in \bb N}$ is a frame for $\cl H.$ So, the analysis operator
$F:\cl H\rightarrow \ell^2,$ given by $F(x)=(\langle{x,\widetilde{k}_{z_i}\rangle})$ is invertible with 
$F^*(e_i)=\widetilde{k}_{z_i}$ for each $i$. Set $R=F^{-1}Q(F^{-1})^*.$ Then $R:\cl H\rightarrow \cl H$ 
is a positive, bounded operator. We now extend $R$ to $H^2$ by defining it be $0$ on ${\cl H}^\perp.$  
We claim that $P=R^{1/2}$ satisfies the required conditions. To prove the claim, we fix $i,\ j\in \bb N,$ 
and consider
\begin{eqnarray*}
\langle{P\widetilde{k}_{z_j},P\widetilde{k}_{z_i}}\rangle &=& \langle{R^{1/2}\widetilde{k}_{z_j},R^{1/2}\widetilde{k}_{z_i}}\rangle\\
&=&\langle{R\widetilde{k}_{z_j},\widetilde{k}_{z_i}}\rangle\\
&=&\langle{Q(F^{-1})^*\widetilde{k}_{z_j},(F^{-1})^*\widetilde{k}_{z_i}}\rangle\\
&=&\langle{Qe_j, e_i\rangle}
\end{eqnarray*}
Hence, 
$ Q=\left(\langle {P\widetilde{k}_{z_j},P\widetilde{k}_{z_i}}\rangle\right).$ 
Also, as obtained above 
$\|P\widetilde{k}_{z_i}\|^2 =\langle{P\widetilde{k}_{z_i},P\widetilde{k}_{z_i}}\rangle = \langle{Qe_i,e_i}\rangle \ge \delta$ for all  $i.$ 
This completes the proof.
\end{proof}

\begin{remark} Note that in Theorem~\ref{st}, we have a great deal of freedom in the choice of the frame sequence 
$\{\widetilde{k}_{z_i}\}_{i\in \bb N}$ and hence on the Hilbert space $\cl H=\overline{{\span}\{\widetilde{k}_{z_i}:i\in \bb N\}},$ 
and also on the behavior of 
$P$ on ${\cl H}^{\perp}.$
\end{remark}

We are now ready to give the proof of our main theorem. But, before
proving the theorem we first recall a characterization of the Hilbert
spaces contractively contained in the Hardy space $H^2$ found in the work of
Sarason\cite{Sa}, which is 
very crucial for our proof as it reveals their connection with positive 
contractions on $H^2.$ 

Let $\cl H$ be a Hilbert space that is contractively contained in the
Hardy space $H^2$ with norm $\|\cdot\|_{\cl H}$. Let $T:\cl H\to H^2$ 
be the inclusion map, then $T$ and $T^*:H^2\to \cl H$ are both contractions. Thus, $P=TT^*$ is 
a bounded, positive contraction in $B(H^2).$ This gives rise to another Hilbert space, the 
{\bf range space} $\cl R(P^{1/2}),$ which one obtains by equipping the range of $P^{1/2}$ with 
the norm, $\|y\|_{P}=\|x\|_{H^2},$ where $x$ is the unique vector in the orthogonal complement 
of the kernel of $P^{1/2}$ such that $y=P^{1/2}x.$ 
One has that $\cl H=\cl R(P^{1/2})$ as sets and the two norms coincide. Thus, 
if a Hilbert space $\cl H$ is contractively contained in $H^2,$ then there exists a positive contraction 
$P\in B(H^2)$ such that $\cl H$ is the range space, $\cl R(P^{1/2}).$ 

On the other hand, given a positive contraction $P\in B(H^2)$ the range space $\cl R(P^{1/2}),$ as 
defined above, is always contractively contained in $H^2.$ 

Henceforth, given a positive contraction $P\in B(H^2),$ we shall denote the 
Hilbert space $\cl R(P^{1/2})$ by $\cl H(P)$ and the kernel function in it for a 
point $w\in \bb D$ by $k^P_w.$  
For the normalized kernel function $\frac{k_w^P}{\|k_w^P\|_P},$ 
we shall use $\widetilde{k}_w^P.$ 
Lastly, we note that 
$k^P_w=Pk_w$ for $w\in \bb D$ and $\|Px\|_P = \|P^{1/2}x\|$ for all
$x\in H^2.$ 



\vspace{.2cm}

\noindent {\bf Proof of Theorem \ref{equi}.}
(i) implies (iii) is trivially true.
We now prove (iii) implies (ii). Let $P\in B(H^2)$ be a positive operator and  let $\{z_i\}_{i\in \bb N}$ be a sequence in $\bb D$ such that  
$\{P\widetilde{k}_{z_i}\}_{i\in \bb N}$ is a bounded Bessel sequence in $H^2$ with 
$\|P\widetilde{k}_{z_i}\|\ge \delta>0$ for all $i.$ Then
$T=P^2/\|P^2\|$ is a positive contraction in $B(H^2)$ and thus $\cl
H(T) = \cl R(T^{1/2})$ is a Hilbert space that is contractively contained in $H^2.$ Further, note that for fixed $i, \ j,$ 

\begin{eqnarray*}
\langle{\widetilde{k}_{z_j}^T,\widetilde{k}_{z_i}^T}\rangle_T 
&=&\left\langle{\frac{Tk_{z_j}}{\|Tk_{z_j}\|_T},\frac{Tk_{z_i}}{\|Tk_{z_i}\|_T}}\right\rangle_T\\
&=&\left\langle{\frac{T^{1/2}k_{z_j}}{\|T^{1/2}k_{z_j}\|},\frac{T^{1/2}k_{z_i}}{\|T^{1/2}k_{z_i}\|}}\right\rangle\\
&=&\left\langle{\frac{Pk_{z_j}}{\|Pk_{z_j}\|},\frac{Pk_{z_i}}{\|Pk_{z_i}\|}}\right\rangle\\
&=&\frac{\|k_{z_j}\|}{\|Pk_{z_j}\|}\langle{P\widetilde{k}_{z_j},P\widetilde{k}_{z_i}}\rangle
\frac{\|k_{z_i}\|}{\|Pk_{z_i}\|}.
\end{eqnarray*}

Hence,  
$$
\left(\langle{\widetilde{k}_{z_j}^T,\widetilde{k}_{z_i}^T}\rangle_T\right) 
=D\left(\langle{P\widetilde{k}_{z_j},P\widetilde{k}_{z_i}}\rangle\right)D^*,
$$
where $D\in B(\ell^2)$ is an invertible, diagonal operator with $i^{th}$ diagonal entry $\frac{\|k_{z_i}\|}{\|Pk_{z_i}\|},$ since  $P\in B(H^2)$ and $\|P k_{z_i}\|\ge \delta\|k_{z_i}\|$ for all $i.$ Then (i) of Proposition \ref{r_equi} implies that $\{\widetilde{k}_{z_i}^T\}_{i\in \bb N}$ is a Bessel sequence in $\cl H(T),$ since $\{P\widetilde{k}_{z_i}\}_{i\in \bb N}$ is a Bessel sequence in $H^2.$ Thus, by assuming (iii), we have that $\{\widetilde{k}_{z_i}^T\}_{i\in \bb N}$ satisfies the FC and hence we conclude that $\{P\widetilde{k}_{z_i}\}_{i\in \bb N}$ also satisfies the FC, by using (iii) of Proposition \ref{r_equi}. This completes the proof of (iii) implies (ii). 
  
Finally, we prove (ii) implies (i). Let $\{f_i\}_{i\in \bb N}$ be a bounded Bessel sequence in a Hilbert space $\cl H$ with $\|f_i\|\ge \delta >0$ for each $i.$ Then, $FF^*$ is a bounded, positive operator in $B(\ell^2),$ where $F:\cl H \to \ell^2$ is the analysis operator associated with $\{f_i\}_{i\in \bb N}.$ Also, $\langle{FF^*(e_i),e_i}\rangle= \|f_i\|^2 \ge \delta^2$ for all $i.$ Thus, by Theorem~\ref{st}, there 
exists a positive operator 
$P\in B(H^2)$ with $\|P\widetilde{k}_{z_i}\|\ge \delta$ for each $i,$ such that 
$$ FF^* = (\langle{f_j,f_i}\rangle)=\left(\langle P{\widetilde{k}_{z_j},P \widetilde{k}_{z_i}\rangle}\right),$$
where $\{\widetilde{k}_{z_i}\}_{i\in \bb N}$ is a frame sequence and hence is a Riesz basic sequence in $H^2.$
Since $FF^*\in B(\ell^2), \ \{P\widetilde{k}_{z_i}\}_{i\in \bb N}$ is a Bessel sequence. Also, $\{P\widetilde{k}_{z_i}\}_{i\in \bb N}$ is 
bounded. Thus by assuming (ii), we have that it satisfies the
FC. Hence, by (iii) of Proposition~\ref{r_equi}, $\{f_i\}_{i\in \bb N}$ also
satisfies the FC. This completes the proof of (ii) implies (i) and of
the theorem.  
 
Finally, note that the remark after the theorem follows immediately from the fact that in Theorem~\ref{st} we choose $\{z_i\}_{i\in \bb N}$ so that $\{\widetilde{k}_{z_i}\}_{i\in \bb N}$ is a Riesz basic sequence. 


\section{Analysis Of New Equivalences}

We can easily verify that statement (ii) of Theorem~\ref{equi} can be reduced to the case of 
positive operators which are contractions. Thus, Theorem~\ref{equi} motivates the study of sequences 
$\{P\widetilde{k}_{z_i}\}_{i\in \bb N}, \ \{\widetilde{k}^P_{z_i}\}_{i\in \bb N},$ where $P\in B(H^2)$ is a positive contraction and 
$\{\widetilde{k}_{z_i}\}_{i\in \bb N}, \ \{\widetilde{k}^P_{z_i}\}_{i\in \bb N}$ are sequences of 
normalized kernel functions in $H^2$ and $\cl H(P),$ respectively. 
By considering positive operators and kernel functions we have much more 
structure to exploit and thereby we can expect some interesting and fruitful research in this 
direction. The theorem suggests that it might not be easy to make any general 
statement about the whole family of these sequences. Instead we focus on some particular families of positive operators and investigate the FC for the corresponding sequences. In this direction we have the following results.

It is elementary to see that the FC holds for $\cl H(P)$ when $P$ is a positive, invertible operator in $B(H^2).$  
Note in this case $\cl H(P)=H^2$ and the two norms are equivalent. Thus, $\{P\widetilde{k}_{z_i}\}_{i\in \bb N}$ is 
a Bessel (frame or Riesz basic) sequence iff $\{\widetilde{k}^P_{z_i}\}_{i\in \bb N}$ is Bessel 
(frame or Riesz basic) sequence iff $\{ \widetilde{k}_{z_i}\}_{i\in \bb N}$ is a Bessel(frame or Riesz basic) sequence. Thus,  every Bessel sequence of 
normalized kernel functions in $\cl H(P)$ satisfies the FC, since the FCKF holds for the Hardy space and hence the space $\cl H(P)$ satisfies the FCKF.

We now focus on some orthogonal projections. If $P$ is an orthogonal projection, then $\cl H(P)$ is just the range of $P$ and the norm is just the usual Hardy space norm.
 In 
\cite{BD}, Baranov and Dyakonov have considered the FC for what are often called de Branges spaces, that is spaces of the form $H^2\ominus \phi H^2$ with some conditions 
on $\phi$ and proved the following two theorems. 

\begin{thm}[Branov-Dyakonov]\label{BD1}Let $\phi$ be an inner function. If $\{z_i\}_{i\in \bb N}$ is a sequence in $\bb D$ such that 
${\rm sup}_i|\phi(z_i)|<1$ for all  $i,$ then the corresponding sequence of normalized kernel functions in $H^2\ominus \phi H^2$ satisfies the FC.  
\end{thm}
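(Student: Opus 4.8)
The plan is to reduce the statement about $\mathcal{H} = H^2 \ominus \phi H^2$ to the fact that $H^2$ itself satisfies the FCKF (Theorem~\ref{NM}). Let $P \in B(H^2)$ be the orthogonal projection onto $\mathcal{H} = H^2 \ominus \phi H^2$, so that $\mathcal{H} = \mathcal{H}(P)$ and the norm on $\mathcal{H}$ is just the $H^2$ norm. The reproducing kernel of $\mathcal{H}$ at $z \in \mathbb{D}$ is the well-known formula $k^P_z = Pk_z = \frac{1 - \overline{\phi(z)}\phi}{1 - z\bar{w}}$ evaluated appropriately; what matters is only that $k^P_z = Pk_z$ and $\|k^P_z\|^2 = K^P(z,z) = \frac{1 - |\phi(z)|^2}{1 - |z|^2}$. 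The hypothesis $\sup_i |\phi(z_i)| < 1$ is exactly what makes the ratio $\|k^P_{z_i}\|/\|k_{z_i}\| = \sqrt{1 - |\phi(z_i)|^2}$ bounded above and bounded below away from $0$, uniformly in $i$.

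First I would take an arbitrary bounded Bessel sequence $\{\widetilde{k}^P_{z_i}\}_{i \in \mathbb{N}}$ of normalized kernel functions in $\mathcal{H}$ and compare its Grammian to that of $\{\widetilde{k}_{z_i}\}_{i \in \mathbb{N}}$ in $H^2$. Since $\widetilde{k}^P_{z_i} = \frac{Pk_{z_i}}{\|Pk_{z_i}\|}$, a direct computation using $P = P^* = P^2$ gives
\[
\langle \widetilde{k}^P_{z_j}, \widetilde{k}^P_{z_i}\rangle = \frac{\langle Pk_{z_j}, Pk_{z_i}\rangle}{\|Pk_{z_j}\|\,\|Pk_{z_i}\|} = \frac{\|k_{z_j}\|}{\|Pk_{z_j}\|}\,\langle \widetilde{k}_{z_j}, \widetilde{k}_{z_i}\rangle\,\frac{\|k_{z_i}\|}{\|Pk_{z_i}\|},
\]
so that $\big(\langle \widetilde{k}^P_{z_j}, \widetilde{k}^P_{z_i}\rangle\big) = D\big(\langle \widetilde{k}_{z_j}, \widetilde{k}_{z_i}\rangle\big)D^*$ with $D$ the diagonal operator whose $i^{\text{th}}$ entry is $\|k_{z_i}\|/\|Pk_{z_i}\| = (1 - |\phi(z_i)|^2)^{-1/2}$. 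By the hypothesis $\sup_i |\phi(z_i)| < 1$, this $D$ is bounded and boundedly invertible, hence an invertible diagonal operator in $B(\ell^2)$.

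Now I would invoke Proposition~\ref{r_equi}: part (i) shows $\{\widetilde{k}_{z_i}\}_{i \in \mathbb{N}}$ is a Bessel sequence in $H^2$ (because $\{\widetilde{k}^P_{z_i}\}$ is Bessel in $\mathcal{H}$), and since it consists of \emph{normalized} kernel functions in $H^2$, Theorem~\ref{NM} gives that it can be partitioned into finitely many Riesz basic sequences, i.e.\ it satisfies the FC. Then part (iii) of Proposition~\ref{r_equi} transports this back: $\{\widetilde{k}^P_{z_i}\}_{i \in \mathbb{N}}$ also satisfies the FC. Since the Bessel sequence was arbitrary, $\mathcal{H} = H^2 \ominus \phi H^2$ satisfies the FCKF, which is the assertion of the theorem.

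I expect no serious obstacle here; the argument is essentially a diagonal-similarity reduction identical in spirit to the proof of (iii)$\Rightarrow$(ii) in Theorem~\ref{equi}. The only point requiring a little care is verifying that $\|Pk_z\|^2 = \frac{1-|\phi(z)|^2}{1-|z|^2}$ (equivalently $\|k^P_z\|^2 = K^P(z,z)$), so that the condition $\sup_i|\phi(z_i)| < 1$ does indeed force the diagonal operator $D$ to be invertible in $B(\ell^2)$; this is where the hypothesis on $\phi$ and the $z_i$ is used, and without it $D^{-1}$ could fail to be bounded and the reduction would break down.
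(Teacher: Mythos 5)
Your reduction breaks down at the central identity. From $P=P^*=P^2$ you correctly get $\langle Pk_{z_j},Pk_{z_i}\rangle=\langle Pk_{z_j},k_{z_i}\rangle$, but this equals $k^P_{z_j}(z_i)=K^P(z_i,z_j)=\frac{1-\phi(z_i)\overline{\phi(z_j)}}{1-z_i\overline{z_j}}$, \emph{not} $\langle k_{z_j},k_{z_i}\rangle=\frac{1}{1-z_i\overline{z_j}}$. Hence
$$\bigl(\langle \widetilde{k}^P_{z_j},\widetilde{k}^P_{z_i}\rangle\bigr)=D\Bigl(\bigl(1-\phi(z_i)\overline{\phi(z_j)}\bigr)\langle \widetilde{k}_{z_j},\widetilde{k}_{z_i}\rangle\Bigr)D^*,$$
and the extra factor $1-\phi(z_i)\overline{\phi(z_j)}$ is not of the form $d_i\overline{d_j}$, so the two Grammians are not related by a diagonal congruence and Proposition~\ref{r_equi} does not apply. (The matrix $\bigl(1-\phi(z_i)\overline{\phi(z_j)}\bigr)$ is a difference of two rank-one positives and need not even be positive semidefinite, so it cannot be absorbed by a Schur-product positivity argument either.) What you have written is, in effect, the argument that works for the \emph{complementary} projection $P_\phi$ onto $\phi H^2$ (the paper's Theorem~\ref{inner3}), where $P_\phi k_z=\overline{\phi(z)}\,\phi k_z$ really does produce a diagonal congruence; transplanted to $H^2\ominus\phi H^2$ it fails. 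Your diagonal computation $\|Pk_z\|^2=\frac{1-|\phi(z)|^2}{1-|z|^2}$ is correct, and the hypothesis $\sup_i|\phi(z_i)|<1$ does give the entrywise two-sided comparison $|\langle \widetilde{k}^P_{z_j},\widetilde{k}^P_{z_i}\rangle|\asymp|\langle \widetilde{k}_{z_j},\widetilde{k}_{z_i}\rangle|$, but entrywise comparability of Gram matrices transfers neither Bessel bounds nor lower Riesz bounds, so this does not rescue the argument.

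Note also that the paper does not prove this statement; it is quoted from Baranov--Dyakonov \cite{BD}, whose proof requires genuinely more machinery than a diagonal similarity with Nikolski's Theorem~\ref{NM}. If the theorem reduced to $H^2$ this easily, the restriction $\sup_i|\phi(z_i)|<1$ and the separate one-component theorem (Theorem~\ref{BD2}) would be superfluous; that is a good sanity check that a one-line congruence cannot suffice here.
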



The second theorem of Baranov and Dyakonov uses one-component inner functions. An inner function $\phi$ is said to be an {\bf one-component} inner function if the set $\{z:|\phi(z)|<\epsilon\}$ is connected for some $\epsilon\in (0,1).$
  
\begin{thm}[Baranov-Dyakonov]\label{BD2}Assume that $\phi$ is a one-component inner function. Then every Bessel 
sequence of normalized 
kernel functions in $H^2\ominus \phi H^2$ satisfies the FC.
\end{thm}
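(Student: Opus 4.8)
The plan is to transplant to the model space $K_\phi:=H^2\ominus\phi H^2$ the two-part structure behind the fact that $H^2$ satisfies the FCKF (Theorem~\ref{NM}): a combinatorial splitting step, in the spirit of McKenna's Theorem~\ref{mck}, reducing matters to pseudohyperbolically separated sequences, followed by a step, in the spirit of the Nikolski--Pavlov Theorem~\ref{NP}, showing that for such sequences a Bessel sequence of normalized kernels is automatically a Riesz basic sequence. Since a normalized kernel function is a unit vector, a Bessel sequence of such vectors is finite whenever $K_\phi$ is finite-dimensional (i.e.\ $\phi$ a finite Blaschke product), and then trivially splits into finitely many one-element Riesz basic sequences; so we may assume $K_\phi$ is infinite-dimensional. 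Recall that $K_\phi$ is a RKHS on $\bb D$ with kernel $k^\phi_w(z)=\frac{1-\overline{\phi(w)}\phi(z)}{1-\bar w z}$, so $\|k^\phi_w\|^2=\frac{1-|\phi(w)|^2}{1-|w|^2}$. The starting point is the Grammian computation: writing $\rho(a,b)=\left|\frac{a-b}{1-\bar a b}\right|$ for the pseudohyperbolic distance and using $1-\rho(a,b)^2=\frac{(1-|a|^2)(1-|b|^2)}{|1-\bar a b|^2}$,
$$\left|\inp{\widetilde k^\phi_{z_j}}{\widetilde k^\phi_{z_i}}\right|=\sqrt{\frac{1-\rho(z_i,z_j)^2}{1-\rho(\phi(z_i),\phi(z_j))^2}},$$
and by the Schwarz--Pick inequality $\rho(\phi(z_i),\phi(z_j))\le\rho(z_i,z_j)$, so the right-hand side lies in $[0,1]$ and, its denominator being at most $1$, is at least $\sqrt{1-\rho(z_i,z_j)^2}$.

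The splitting step works for every inner $\phi$. Assume $\{\widetilde k^\phi_{z_i}\}_{i\in\bb N}$ is Bessel with bound $B$. Testing the Bessel inequality against $f=\widetilde k^\phi_{z_j}$ and dropping the $i=j$ term gives $\sum_{i\ne j}\left|\inp{\widetilde k^\phi_{z_j}}{\widetilde k^\phi_{z_i}}\right|^2\le B-1$, whence, by the lower bound above, $\#\{\,i\ne j:\rho(z_i,z_j)\le 1/\sqrt2\,\}\le 2(B-1)$. Thus the graph on $\bb N$ with an edge between $i\ne j$ whenever $\rho(z_i,z_j)\le 1/\sqrt2$ has all vertex degrees at most $2(B-1)$, hence is colorable with a number of colors bounded in terms of $B$ alone; the color classes partition $\{z_i\}_{i\in\bb N}$ into finitely many subsequences, each of which is pseudohyperbolically separated and, being a subsequence, still Bessel. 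This is the analogue of Theorem~\ref{mck}, and it uses nothing about $\phi$.

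The second step is where the one-component hypothesis is essential. The claim is: if $\phi$ is a one-component inner function and $\{z_i\}$ is pseudohyperbolically separated, then $\{\widetilde k^\phi_{z_i}\}$ is a Riesz basic sequence in $K_\phi$ as soon as it is a Bessel sequence. Granting this, the theorem follows immediately: split a given Bessel sequence $\{\widetilde k^\phi_{z_i}\}_{i\in\bb N}$ by the previous step into finitely many separated Bessel subsequences, each of which is then Riesz basic. To prove the claim one first notes, just as in the Grammian computation, that $\{\widetilde k^\phi_{z_i}\}$ is Bessel iff the measure $\mu=\sum_i\|k^\phi_{z_i}\|^{-2}\delta_{z_i}$ is a Carleson measure for $K_\phi$; by Proposition~\ref{riesz2} it then remains to produce the lower bound $FF^*\ge cI$, i.e.\ to show that for separated $\{z_i\}$ this embedding is automatically bounded below. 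This is a model-space version of the geometric form of Carleson's interpolation theorem, and it is here that one invokes Cohn's description of Carleson measures for $K_\phi$ when $\phi$ is one-component, together with Aleksandrov's estimate $|\phi'(z)|\asymp\frac{1-|\phi(z)|^2}{1-|z|^2}$ on a level set $\{|\phi|<\epsilon\}$; these transfer the classical $H^2$ argument to $K_\phi$.

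The main obstacle is exactly this last step. The Grammian identity and the coloring argument are soft and valid for every inner function, and already reduce the problem to separated sequences; but for a general inner $\phi$ a separated Bessel sequence of reproducing kernels need not be a Riesz basic sequence, by the Volberg--Treil examples, so no purely soft completion is possible. Moreover Theorem~\ref{BD1} handles sequences with $\sup_i|\phi(z_i)|<1$ precisely because such sequences remain inside a single level set of $\phi$; in the present generality the points may approach $\partial\bb D$ through regions where $|\phi|\to 1$, and controlling $\widetilde k^\phi_{z_i}$ there — comparing its behavior on and off a level set — is the place where the one-component geometry is indispensable and is the technical heart of the proof.
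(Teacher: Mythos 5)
First, a point of orientation: the paper does not prove this statement at all --- Theorem~\ref{BD2} is quoted from Baranov and Dyakonov \cite{BD} as an external result, so there is no internal proof to compare yours against. Your sketch does follow the general shape of their argument (Grammian identity, McKenna-type counting and coloring, then a hard embedding/separation theorem for one-component inner functions), and your Grammian computation $\bigl|\langle\widetilde k^\phi_{z_j},\widetilde k^\phi_{z_i}\rangle\bigr|^2=\frac{1-\rho(z_i,z_j)^2}{1-\rho(\phi(z_i),\phi(z_j))^2}$ is correct. But as written the proposal has two genuine gaps, one of which you acknowledge and one of which you do not.

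The unacknowledged gap is that you split with respect to the wrong separation notion. Your coloring argument produces classes that are pseudohyperbolically separated in the points $z_i$, and your key intermediate claim is that for one-component $\phi$, ``pseudohyperbolically separated $+$ Bessel $\Rightarrow$ Riesz basic.'' In $H^2$ point-separation and uniform non-parallelism of normalized kernels coincide, because there $\bigl|\langle\widetilde k_{z_i},\widetilde k_{z_j}\rangle\bigr|^2=1-\rho(z_i,z_j)^2$ exactly; in $K_\phi$ they do not, since the denominator $1-\rho(\phi(z_i),\phi(z_j))^2$ can be as small as the numerator. Concretely, for the (one-component) atomic singular inner function $S(z)=\exp\frac{z+1}{z-1}$ the invariant derivative $\frac{|S'(z)|(1-|z|^2)}{1-|S(z)|^2}$ tends to $1$ where $|S(z)|\to 1$, so one can choose infinitely many widely spaced pairs $\{z_1^{(k)},z_2^{(k)}\}$ with $\rho(z_1^{(k)},z_2^{(k)})=1/\sqrt2$ but $\bigl|\langle\widetilde k^S_{z_1^{(k)}},\widetilde k^S_{z_2^{(k)}}\rangle\bigr|\to 1$: the resulting sequence is point-separated and Bessel yet has no uniform Riesz lower bound. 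So your intermediate claim is false, and the Volberg--Treil/Cohn characterization you want to invoke requires the stronger hypothesis $\sup_{i\ne j}\bigl|\langle\widetilde k^\phi_{z_i},\widetilde k^\phi_{z_j}\rangle\bigr|<1$. The repair is cheap --- run the identical counting/coloring argument on the Grammian entries themselves, i.e.\ on $\#\{i\ne j:\bigl|\langle\widetilde k^\phi_{z_j},\widetilde k^\phi_{z_i}\rangle\bigr|\ge 1/\sqrt2\}\le 2(B-1)$ --- but it must be made. The acknowledged gap is that even after this repair, the entire analytic content of the theorem (Cohn's Carleson embedding theorem for $K_\phi$, Aleksandrov's estimate, and the deduction that weak separation plus the embedding yields a Riesz sequence when $\phi$ is one-component) is only named, not proved. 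Since that is precisely where the one-component hypothesis enters and where Baranov--Dyakonov do their work, the proposal is an outline pointing at the literature rather than a proof.
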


Note that given an inner function $\phi,$ the model space 
$H^2\ominus \phi H^2$ is the de~Branges space $\cl H(P),$ where 
$P\in B(H^2)$ is the orthogonal projection onto $H^2\ominus \phi H^2.$ 
Hence, the above theorems of 
Baranov and Dyakonov analyzes the class of de~Branges spaces $\cl H(P)$ for FCKF, where $P$ belongs to the 
family of projections onto $H^2\ominus \phi H^2$ and $\phi$ is an inner function with 
properties, as stated 
in Theorem \ref{BD1} and \ref{BD2}. In particular, their second theorem proves that when $\phi$ is an one-component inner function and $P$ is the orthogonal projection onto 
$H^2\ominus \phi H^2,$ then the de Branges space $\cl H(P)$ satisfies FCKF.  
 
The result for the complementary projection is elementary.

\begin{thm}\label{inner3} Let $\phi$ be an inner function
and let $P_\phi$ be the orthogonal projection onto $\phi H^2.$ Then the space $\cl H(P_\phi)$ satisfies the FCKF. 
\end{thm}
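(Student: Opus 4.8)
The key observation is that multiplication by the inner function $\phi$ is an isometry on $H^2$ that carries the Szegő kernel in a controlled way. Concretely, let $M_\phi \in B(H^2)$ denote multiplication by $\phi$; it is an isometry with range $\phi H^2$, so $P_\phi = M_\phi M_\phi^*$ and $M_\phi^* k_w = \overline{\phi(w)}\, k_w$ for every $w \in \bb D$. From this one computes $k^{P_\phi}_w = P_\phi k_w = M_\phi M_\phi^* k_w = \overline{\phi(w)}\, \phi k_w = \overline{\phi(w)}\, M_\phi k_w$. Since $|\phi(w)| < 1$ on $\bb D$ (with equality nowhere, as $\phi$ is inner and nonconstant; the constant case is trivial), $k^{P_\phi}_w \neq 0$ for all $w$, and $\|k^{P_\phi}_w\|_{P_\phi} = \|M_\phi^* k_w\| = |\phi(w)|\,\|k_w\|$. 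Therefore the normalized kernel function in $\cl H(P_\phi)$ is
\[
\widetilde{k}^{P_\phi}_w \;=\; \frac{k^{P_\phi}_w}{\|k^{P_\phi}_w\|_{P_\phi}} \;=\; \frac{\overline{\phi(w)}}{|\phi(w)|}\, M_\phi \widetilde{k}_w,
\]
a unimodular scalar times $M_\phi$ applied to the $H^2$-normalized Szegő kernel.

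**Reducing to $H^2$.** Now suppose $\{\widetilde{k}^{P_\phi}_{z_i}\}_{i\in\bb N}$ is a Bessel sequence of normalized kernel functions in $\cl H(P_\phi)$. Using the identity above and the fact that $M_\phi$ is an isometry, the Grammian satisfies
\[
\left(\langle \widetilde{k}^{P_\phi}_{z_j}, \widetilde{k}^{P_\phi}_{z_i}\rangle_{P_\phi}\right) \;=\; U\left(\langle \widetilde{k}_{z_j}, \widetilde{k}_{z_i}\rangle_{H^2}\right) U^*,
\]
where $U \in B(\ell^2)$ is the diagonal unitary with $i^{\text{th}}$ entry $\overline{\phi(z_i)}/|\phi(z_i)|$. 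In particular $D := U$ is an invertible diagonal operator, so Proposition~\ref{r_equi} applies: $\{\widetilde{k}^{P_\phi}_{z_i}\}$ is Bessel (resp. satisfies the FC) if and only if $\{\widetilde{k}_{z_i}\}$ is Bessel (resp. satisfies the FC) in $H^2$. Hence $\{\widetilde{k}_{z_i}\}_{i\in\bb N}$ is a bounded Bessel sequence of normalized kernel functions in $H^2$ (bounded automatically, since all normalized kernel functions have norm one), and by Theorem~\ref{NM} it can be partitioned into finitely many Riesz basic sequences. Pulling this partition back through Proposition~\ref{r_equi}(iii) shows $\{\widetilde{k}^{P_\phi}_{z_i}\}$ can also be so partitioned, which is exactly the assertion that $\cl H(P_\phi)$ satisfies the FCKF.

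**Main obstacle.** There is essentially no hard analytic step here; the content is the bookkeeping that $M_\phi^* k_w = \overline{\phi(w)} k_w$ (a standard fact, since $\langle M_\phi^* k_w, f\rangle = \langle k_w, \phi f\rangle = \overline{\phi(w)}\overline{f(w)}$) and that the norm on $\cl H(P_\phi)$ restricted to its kernel functions is controlled by $\|M_\phi^* k_w\|$ via the range-space formula $\|Px\|_P = \|P^{1/2}x\|$ recalled before the proof of Theorem~\ref{equi}. The only point requiring a word of care is checking that $P_\phi = M_\phi M_\phi^*$ is indeed the orthogonal projection onto $\phi H^2$ (immediate since $M_\phi$ is an isometry onto $\phi H^2$) so that $\cl H(P_\phi)$ as defined in the paper really is $\phi H^2$ with the $H^2$ norm — which then makes $\widetilde{k}^{P_\phi}_w$ literally a unitary scalar multiple of $M_\phi\widetilde{k}_w$ and lets Proposition~\ref{r_equi} finish the argument. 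So the ``hard part'' is merely identifying the right unitary conjugacy with $H^2$; once that is in hand the result is an immediate corollary of Nikolski's Theorem~\ref{NM}.
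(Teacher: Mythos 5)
Your proof is correct and essentially identical to the paper's: both write $P_\phi = T_\phi T_\phi^*$ (your $M_\phi$ is the paper's Toeplitz operator $T_\phi$), show that the Grammian of $\{\widetilde{k}^{P_\phi}_{z_i}\}$ is a diagonal-unitary conjugate of the $H^2$ Grammian $\left(\langle \widetilde{k}_{z_j},\widetilde{k}_{z_i}\rangle\right)$, and finish with Nikolski's Theorem~\ref{NM} via the Grammian equivalences. The only (harmless) slip is your claim that $k^{P_\phi}_w \neq 0$ for all $w$ because $|\phi(w)|<1$: in fact $k^{P_\phi}_w = \overline{\phi(w)}\,\phi k_w$ vanishes precisely at the zeros of $\phi$, but since normalized kernel functions are only defined where the kernel is nonzero, the hypothesis already forces $\phi(z_i)\neq 0$ for every $i$ and your argument is unaffected.
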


\begin{proof}Let $\{\widetilde{k}_{z_i}^{P_{\phi}}\}_{i\in \bb N}, \ \{z_i\}_{i\in \bb N}\subseteq \bb D$ be a Bessel sequence in $\cl H(P_\phi).$ To prove that this sequence satisfies the FC, we first observe that $P_\phi=T_\phi{T_\phi}^*,$ where $T_\phi$ is the Toeplitz operator with 
symbol $\phi$ and ${T_\phi}^*k_{z_i}=\overline{\phi(z_i)}k_{z_i}.$ 
Also, $\cl H(P_\phi)$ coincides with the range of $P_{\phi}$ and the two norms are equal, since $P_{\phi}$ is an orthogonal projection. To simplify notation, we set $P=P_{\phi}.$ Then, 
$$
{\langle{\widetilde{k}_{z_j}^P,\widetilde{k}_{z_i}^P}\rangle}_P
= \left\langle{\frac{Pk_{z_j}}{\|Pk_{z_j}\|},\frac{Pk_{z_i}}{\|Pk_{z_i}\|}}\right\rangle
=\frac{\overline{\phi(z_j)}}{|\phi(z_j)|}\langle{\widetilde{k}_{z_j},\widetilde{k}_{z_i}}\rangle\frac{\phi(z_i)}{|\phi(z_i)|}
$$
Hence,
$$
\left({\langle{\widetilde{k}_{z_j}^P,\widetilde{k}_{z_i}^P}\rangle}_P\right) = D \left(\langle{\widetilde{k}_{z_j},\widetilde{k}_{z_i}}\rangle \right)D^*,
$$  
where $D\in B(\ell^2)$ is an invertible, diagonal operator with $i^{th}$ diagonal entry $\frac{\phi(z_i)}{|\phi(z_i)|}.$ 
Finally, using Proposition \ref{FC-equi} and Theorem \ref{NM}, we conclude that $\{\widetilde{k}_{z_i}^{P_\phi}\}_{i\in \bb N}$ satisfies the FC and hence $\cl H(P_\phi)$ satisfies the FCKF. 
\end{proof}

By taking a closer look at the proof of (iii) implies (ii) in Theorem \ref{equi}, we notice that in order to prove that for an orthogonal projection $P$ a bounded Bessel sequence $\{P\widetilde{k}_{z_i}\}_{i\in \bb N}$ satisfies the FC, all we need is that the corresponding sequence $\{\widetilde{k}_{z_i}^P\}_{i\in \bb N}$ in $\cl H(P),$ for the same $P,$ satisfies the FC. As an immediate consequence we get the following result.  

\begin{thm}\label{inner1}Let $\phi$ be an inner function
and let $P_\phi$ be the orthogonal projection onto $\phi H^2.$ If  
$\{z_i\}_{i\in \bb N}$ is a sequence in $\bb D$ such that  $\{P_\phi\widetilde{k}_{z_i}\}_{i\in \bb N}$ is a bounded Bessel sequence in $H^2$, then $\{P_\phi\widetilde{k}_{z_i}\}_{i\in \bb N}$ satisfies the FC. 
\end{thm}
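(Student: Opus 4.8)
The plan is to follow exactly the mechanism indicated in the paragraph preceding the statement, feeding in Theorem~\ref{inner3}. Write $P=P_\phi$, which is a nonzero orthogonal projection, hence a positive contraction; recall that $\cl H(P)$ is just the range of $P$ (namely $\phi H^2$) carried with the ordinary $H^2$ norm, that $k_{z_i}^P=Pk_{z_i}$, and that $\|Px\|_P=\|P^{1/2}x\|=\|Px\|$ since $P^{1/2}=P$. The reason this is a genuine corollary is that the auxiliary contraction $T=P^2/\|P^2\|$ appearing in the proof of (iii)$\Rightarrow$(ii) of Theorem~\ref{equi} collapses here to $T=P$ (because $P^2=P$ and $\|P\|=1$), so $\cl H(T)=\cl H(P_\phi)$, and Theorem~\ref{inner3} already guarantees that $\cl H(P_\phi)$ satisfies the FCKF.

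Next I would run the Grammian comparison. Since $\{P\widetilde{k}_{z_i}\}_{i\in\bb N}$ is bounded there is $\delta>0$ with $\|P\widetilde{k}_{z_i}\|=\|Pk_{z_i}\|/\|k_{z_i}\|\ge\delta$; in particular $Pk_{z_i}\ne 0$ for every $i$, so the normalized kernel functions $\widetilde{k}_{z_i}^P\in\cl H(P_\phi)$ are well defined. A direct computation, identical to the one in the proof of (iii)$\Rightarrow$(ii), gives
$$
\big\langle \widetilde{k}_{z_j}^{P},\widetilde{k}_{z_i}^{P}\big\rangle_P
=\frac{\|k_{z_j}\|}{\|Pk_{z_j}\|}\,\big\langle P\widetilde{k}_{z_j},P\widetilde{k}_{z_i}\big\rangle\,\frac{\|k_{z_i}\|}{\|Pk_{z_i}\|},
$$
that is, $\big(\langle \widetilde{k}_{z_j}^{P},\widetilde{k}_{z_i}^{P}\rangle_P\big)=D\big(\langle P\widetilde{k}_{z_j},P\widetilde{k}_{z_i}\rangle\big)D^*$, where $D\in B(\ell^2)$ is the diagonal operator whose $i$th diagonal entry is $\|k_{z_i}\|/\|Pk_{z_i}\|$. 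Because $P$ is a contraction each entry is $\ge 1$, and because $\|Pk_{z_i}\|\ge\delta\|k_{z_i}\|$ each entry is $\le 1/\delta$; hence $D$ is invertible. This is the only place the boundedness hypothesis is used, and it is the only step that is not purely formal.

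Finally I would assemble the pieces via Proposition~\ref{r_equi}. Since $\{P\widetilde{k}_{z_i}\}_{i\in\bb N}$ is a Bessel sequence and $D$ is an invertible diagonal operator, part (i) shows $\{\widetilde{k}_{z_i}^P\}_{i\in\bb N}$ is a Bessel sequence in $\cl H(P_\phi)$; being a Bessel sequence of normalized kernel functions in $\cl H(P_\phi)$, it satisfies the FC by Theorem~\ref{inner3}; and part (iii) then transfers this back across $D$ to conclude that $\{P\widetilde{k}_{z_i}\}_{i\in\bb N}$ satisfies the FC. I do not anticipate any real obstacle: the substance of the result is Theorem~\ref{inner3} (which in turn rests on Theorem~\ref{NM}), and the present statement is just the observation that an orthogonal projection is its own square, so the reduction used in Theorem~\ref{equi} applies with $\cl H(T)$ already identified.
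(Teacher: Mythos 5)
Your proof is correct and follows essentially the same route as the paper: the paper derives Theorem~\ref{inner1} as an immediate consequence of the observation that for an orthogonal projection the reduction in (iii)$\Rightarrow$(ii) of Theorem~\ref{equi} lands back in $\cl H(P_\phi)$ itself (since $P_\phi^2=P_\phi$), combined with Theorem~\ref{inner3}. Your write-up simply makes explicit the Grammian conjugation by the diagonal operator $D$ and the two applications of Proposition~\ref{r_equi}, which is exactly the intended argument.
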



 
These last two results allow us to draw some conclusions about spaces that contain the image of an inner function, that look analogous to the theorems of Baranov and Dyakanov and require bounds away from 0 instead of away from 1. First we need a preliminary result.

\begin{prop}\label{inner}Let $\phi$ be an inner function
and let $P_\phi$ be the orthogonal projection onto $\phi H^2.$
Then for every sequence $\{z_i\}_{i\in \bb N}\subseteq\bb D$ such that there exists a $\delta>0$ with $|\phi(z_i)|\ge \delta$ for all $i,$ the following hold true:
\begin{enumerate}
 \item[(i)] for each $i, \ {\delta}\|k_{z_i}\|\le \|P_\phi k_{z_i}\|\le \|k_{z_i}\|,$
\item[(ii)] $\{P_\phi\widetilde{k}_{z_i}\}_{i\in \bb N}$ is a Bessel sequence iff  $\{\widetilde{k}_{z_i}\}_{i\in \bb N}$ 
is a Bessel sequence,
\item[(iii)]$\{P_\phi \widetilde{k}_{z_i}\}_{i\in \bb N}$ is a frame sequence iff it is a Riesz basic sequence,
\item[(iv)] $\{P_\phi\widetilde{k}_{z_i}\}_{i\in \bb N}$ is a Riesz basic sequence iff $\{\widetilde{k}_{z_i}\}_{i\in \bb N}$ 
is a Riesz basic sequence.
\end{enumerate}
\end{prop}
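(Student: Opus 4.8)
The key observation is that $P_\phi = T_\phi T_\phi^*$ and $T_\phi^* k_{z_i} = \overline{\phi(z_i)} k_{z_i}$, so $P_\phi k_{z_i} = T_\phi(\overline{\phi(z_i)} k_{z_i})$; since $T_\phi$ is an isometry, $\|P_\phi k_{z_i}\| = |\phi(z_i)|\,\|k_{z_i}\|$. This immediately gives (i), using $|\phi(z_i)| \ge \delta$ for the lower bound and $|\phi(z_i)| \le 1$ for the upper. For (ii)--(iv), I would compute the Grammian of $\{P_\phi \widetilde{k}_{z_i}\}$ in terms of that of $\{\widetilde{k}_{z_i}\}$, exactly as in the proof of Theorem~\ref{inner3}: writing $P = P_\phi$,
$$
\langle P\widetilde{k}_{z_j}, P\widetilde{k}_{z_i}\rangle = \frac{\|k_{z_j}\|}{\|P k_{z_j}\|}\cdot\frac{\overline{\phi(z_j)}}{|\phi(z_j)|}\,\langle \widetilde{k}_{z_j}, \widetilde{k}_{z_i}\rangle\,\frac{\phi(z_i)}{|\phi(z_i)|}\cdot\frac{\|k_{z_i}\|}{\|P k_{z_i}\|},
$$
so that $\left(\langle P\widetilde{k}_{z_j}, P\widetilde{k}_{z_i}\rangle\right) = D\left(\langle \widetilde{k}_{z_j},\widetilde{k}_{z_i}\rangle\right) D^*$ for a diagonal operator $D$ with $i$th entry $\frac{\|k_{z_i}\|}{\|P k_{z_i}\|}\cdot\frac{\phi(z_i)}{|\phi(z_i)|}$. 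By part (i), the moduli of the diagonal entries lie in $[1, 1/\delta]$, so $D$ is invertible in $B(\ell^2)$.

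Once this intertwining is in place, everything follows from Proposition~\ref{r_equi} and the Grammian characterizations. Part (ii) is precisely Proposition~\ref{r_equi}(i). For (iv), I would invoke the characterization that a Bessel sequence is a Riesz basic sequence iff its Grammian is bounded below (Proposition~\ref{riesz2}); conjugation by an invertible operator preserves the property of being bounded below, so $\{P_\phi\widetilde{k}_{z_i}\}$ is Riesz basic iff $\{\widetilde{k}_{z_i}\}$ is. (Alternatively, combine Proposition~\ref{r_equi}(ii) with part (iii) to get (iv), though going directly through Grammians is cleaner.) For (iii), note that by part (ii) and the fact that the Bessel property is needed throughout, we may assume $\{P_\phi\widetilde{k}_{z_i}\}$ is Bessel; then by Proposition~\ref{r_equi}(ii) it is a frame sequence iff $\{\widetilde{k}_{z_i}\}$ is a frame sequence, and by Theorem~\ref{fr} the latter is a frame sequence iff it is a Riesz basic sequence; combining with (iv) closes the loop.

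I do not expect any real obstacle here: the content is entirely the norm computation $\|P_\phi k_{z_i}\| = |\phi(z_i)|\,\|k_{z_i}\|$ together with the bookkeeping of diagonal conjugations. The one point requiring a little care is the logical ordering of (ii)--(iv) so that the Bessel hypothesis is available where needed — in particular, when proving (iii) one should reduce to the Bessel case via (ii) before applying Theorem~\ref{fr} and Proposition~\ref{r_equi}(ii), and when proving (iv) one should note that the "only if" and "if" directions each start from a Riesz basic sequence, which is automatically Bessel, so (ii) applies on both sides. The inner-function structure (that $\phi$ is inner, not merely bounded) is used only to get that $T_\phi$ is an isometry, which is what makes the norm identity exact rather than merely a two-sided estimate.
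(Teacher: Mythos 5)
Your proof is correct and follows essentially the same route as the paper: the identity $T_\phi^* k_{z_i} = \overline{\phi(z_i)}\,k_{z_i}$ gives $\|P_\phi k_{z_i}\| = |\phi(z_i)|\,\|k_{z_i}\|$ (hence (i)) and exhibits the Grammian of $\{P_\phi\widetilde{k}_{z_i}\}$ as $D\left(\langle\widetilde{k}_{z_j},\widetilde{k}_{z_i}\rangle\right)D^*$ with $D$ an invertible diagonal operator, after which (ii)--(iv) follow from Proposition~\ref{r_equi}, Proposition~\ref{riesz2} and Theorem~\ref{fr} exactly as you describe. One small slip: in your displayed formula the normalizing ratio should be $\|P k_{z_i}\|/\|k_{z_i}\|$ rather than its reciprocal, so that the product of the factors collapses to the single diagonal entry $\phi(z_i)$ (the paper's $D$); since either way the moduli of the diagonal entries are bounded above and below by constants depending only on $\delta$, this does not affect the validity of the argument.
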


\begin{proof}Let $\{z_i\}_{i\in \bb N}$ be a sequence in $\bb D$ and let $\delta>0$ be a constant such that $|\phi(z_i)|\ge \delta$ for all $i.$  
   
As noted earlier, $P_\phi=T_\phi{T_\phi}^*,$ where $T_\phi$ is the Toeplitz operator with 
symbol $\phi,$ and ${T_\phi}^*k_{z_i}=\overline{\phi(z_i)}k_{z_i}.$ 
Thus,
\begin{equation}
\langle{P_\phi\widetilde{k}_{z_j}, P_\phi\widetilde{k}_{z_i}}\rangle
= \langle{{T_\phi}^*\widetilde{k}_{z_j}, {T_\phi}^*\widetilde{k}_{z_i}}\rangle
= \overline{\phi(z_j)}\langle{\widetilde{k}_{z_j}, \widetilde{k}_{z_i}}\rangle\phi(z_i)
\end{equation}
Hence, 
\begin{equation}\label{D}
\left(\langle{P_\phi\widetilde{k}_{z_j}, P_\phi\widetilde{k}_{z_i}}\rangle\right) = D\left(\langle{\widetilde{k}_{z_j}, \widetilde{k}_{z_i}}\rangle\right)D^*,
\end{equation}
where $D\in B(\ell^2)$ is an invertible, diagonal operator with $\phi(z_i)$ as the $i^{th}$ diagonal entry, since $\delta \le |\phi(z_i)| \le 1$ for all $i.$ 
  
Clearly, (i) follows from Equation (2) and (ii), (iii) and (iv) follows from Equation (3), using Proposition \ref{r_equi} and Theorem \ref{fr}. 
\end{proof}


 
We can generalize Proposition~\ref{inner} and Theorem~\ref{inner1} as follows. 
Given an operator $T\in B(H^2)$ we shall denote the range of $T$ by $Ran(T).$

\begin{prop}\label{inner4}Let $P\in B(H^2)$ 
be an orthogonal projection. Given a sequence $\{z_i\}_{i\in \bb N}$ in $\bb D,$ if there exists an inner function $\phi$ such that $|\phi(z_i)|\ge \delta$ for all $i$ and $\phi H^2\subseteq Ran(P),$ then:  
\begin{enumerate}
 \item[(i)] for each $i, \ {\delta}\|k_{z_i}\|\le \|Pk_{z_i}\|\le \|k_{z_i}\|,$
\item[(ii)]$\{P\widetilde{k}_{z_i}\}_{i\in \bb N}$ is a Bessel sequence iff $\{\widetilde{k}_{z_i}\}_{i\in \bb N}$ 
is a Bessel sequence,
\item[(iii)]$\{P\widetilde{k}_{z_i}\}_{i\in \bb N}$ is a frame sequence iff it is a Riesz basic sequence,
\item[(iv)]$\{P\widetilde{k}_{z_i}\}_{i\in \bb N}$ is a Riesz basic sequence iff $\{\widetilde{k}_{z_i}\}_{i\in \bb N}$ 
is a Riesz basic sequence,
\end{enumerate} 
\end{prop}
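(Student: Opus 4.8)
The plan is to reduce Proposition~\ref{inner4} to Proposition~\ref{inner} by showing that the extra room in $Ran(P)$ beyond $\phi H^2$ does not affect the relevant Grammian. Write $Q = P_\phi$ for the orthogonal projection onto $\phi H^2$, so that $Q \le P$ as positive operators since $\phi H^2 \subseteq Ran(P)$. The key observation is that for each $w \in \bb D$ with $|\phi(w)| \ge \delta$, the kernel function $k_w$ has a large component inside $\phi H^2$: indeed $\|Q k_w\| = |\phi(w)|\,\|k_w\|$ by the computation $T_\phi^* k_w = \overline{\phi(w)} k_w$ already recorded in the proof of Proposition~\ref{inner}. The main point I would isolate is that $Pk_w = Qk_w$ for each such $w$; equivalently, $k_w$ has no component in $Ran(P) \ominus \phi H^2$. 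This is because $k_w \in \phi H^2 + (H^2 \ominus \phi H^2)$ trivially, and the component of $k_w$ in $Ran(P)$ is obtained by projecting; but $H^2 \ominus \phi H^2 = \ker T_\phi^*$-complement\ldots — here I must be careful: in general $Pk_w \ne Qk_w$. So instead the cleaner route is to \emph{not} claim $Pk_w = Qk_w$ but to bound the Grammian from both sides.

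The actual approach: since $Q \le P \le I$, we have $\|Qx\| \le \|Px\| \le \|x\|$ for all $x$, hence for $x = k_{z_i}$, using $\|Qk_{z_i}\| = |\phi(z_i)|\,\|k_{z_i}\| \ge \delta\|k_{z_i}\|$, we immediately get (i): $\delta\|k_{z_i}\| \le \|Pk_{z_i}\| \le \|k_{z_i}\|$. For the Grammian comparison needed for (ii)--(iv), I would observe that $P - Q$ is a positive contraction with range orthogonal to $\phi H^2$, and I want to control $\langle (P-Q)\widetilde{k}_{z_j}, \widetilde{k}_{z_i}\rangle$. The clean fact is: $(P - Q)k_w = (P - Q)(I - Q)k_w$, and $\|(I-Q)k_w\|^2 = \|k_w\|^2 - \|Qk_w\|^2 = (1 - |\phi(w)|^2)\|k_w\|^2 \le (1 - \delta^2)\|k_w\|^2$. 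Thus $\{(P-Q)\widetilde{k}_{z_i}\}$ is dominated in norm by $\{(I-Q)\widetilde{k}_{z_i}\}$ componentwise, and the latter lives in the model space $H^2 \ominus \phi H^2$. Writing the Grammian of $\{P\widetilde{k}_{z_i}\}$ as the sum of the Grammian of $\{Q\widetilde{k}_{z_i}\}$ (which by Proposition~\ref{inner} is equivalent, via a diagonal similarity, to that of $\{\widetilde{k}_{z_i}\}$) plus a ``cross'' correction, I would argue the correction is a bounded perturbation that cannot destroy the Bessel/frame/Riesz properties — but a bounded perturbation \emph{can} destroy these, so this naive estimate is insufficient.

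Reconsidering, the honest and robust argument runs entirely through the already-proved Theorem~\ref{inner1} and the observation following it. Here is the plan I would actually execute. For (ii): if $\{\widetilde{k}_{z_i}\}$ is Bessel then $\{P\widetilde{k}_{z_i}\}$ is Bessel since $P$ is a contraction. Conversely, if $\{P\widetilde{k}_{z_i}\}$ is Bessel, then since $Q \le P$ one checks $\langle Q\widetilde{k}_{z_j}, Q\widetilde{k}_{z_i}\rangle$-Grammian is dominated by the $P$-Grammian in the sense that $0 \le (\langle Q\widetilde{k}_{z_j},Q\widetilde{k}_{z_i}\rangle) \le (\langle P\widetilde{k}_{z_j},P\widetilde{k}_{z_i}\rangle)$ — this inequality of Grammians holds because for any finitely supported $(\alpha_i)$, $\|\sum\alpha_i Q\widetilde{k}_{z_i}\|^2 = \langle Q\sum\alpha_i\widetilde{k}_{z_i}, \sum\alpha_i\widetilde{k}_{z_i}\rangle \le \langle P\sum\alpha_i\widetilde{k}_{z_i},\sum\alpha_i\widetilde{k}_{z_i}\rangle = \|\sum\alpha_i P\widetilde{k}_{z_i}\|^2$, using $Q = Q^2 \le QPQ$\ldots — again this needs $Q \le QPQ$ which follows from $I \le$ nothing. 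The genuinely correct inequality is $\langle Q x, x\rangle \le \langle P x, x\rangle$ for all $x$, hence $Q$-Grammian $\le P$-Grammian as operators on $\ell^2$; then $P$-Grammian bounded $\Rightarrow$ $Q$-Grammian bounded $\Rightarrow$ (by Proposition~\ref{inner}(ii) and its diagonal similarity) $\{\widetilde{k}_{z_i}\}$ Bessel. For (iv), the forward direction: $\{\widetilde{k}_{z_i}\}$ Riesz basic $\Rightarrow$ (Prop~\ref{inner}(iv)) $\{Q\widetilde{k}_{z_i}\}$ Riesz basic $\Rightarrow$ $Q$-Grammian $\ge cI$ on $\ell^2$ $\Rightarrow$ $P$-Grammian $\ge cI$ $\Rightarrow$ $\{P\widetilde{k}_{z_i}\}$ Riesz basic (it is Bessel by (ii)), invoking Proposition~\ref{riesz2}. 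The reverse: $\{P\widetilde{k}_{z_i}\}$ Riesz basic; since $\|P\widetilde{k}_{z_i}\| \le 1$ it is bounded above, and $\|P\widetilde{k}_{z_i}\|\ge\delta$ so Bessel bounded; then $\{P\widetilde{k}_{z_i}\}$ is in particular a frame sequence, and I want to conclude $\{\widetilde{k}_{z_i}\}$ is Riesz basic — here I would use that the $P$-Grammian $\ge cI$, but I cannot directly pass to the $Q$-Grammian going this direction, so instead I would use $\|P\widetilde{k}_{z_i} - Q\widetilde{k}_{z_i}\|$ is small when $\delta$ is close to $1$, which is not assumed; so the reverse of (iv) must go through Theorem~\ref{fr}/Theorem~\ref{mck} applied in $H^2$ after noting $\{P\widetilde{k}_{z_i}\}$ Riesz basic forces a Carleson-type lower bound. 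Then (iii) follows from (iv) together with Theorem~\ref{fr} exactly as in Proposition~\ref{inner}: a frame sequence $\{P\widetilde{k}_{z_i}\}$ has $P$-Grammian bounded below on the complement of its kernel, but combined with the structure one shows the kernel is trivial, giving Riesz basic.

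The main obstacle I anticipate is the reverse direction of (iv) — showing that $\{P\widetilde{k}_{z_i}\}$ being a Riesz basic sequence forces $\{\widetilde{k}_{z_i}\}$ to be Riesz basic — because going ``upward'' from the $P$-Grammian to the full $H^2$-Grammian is not a contraction estimate. I expect this is handled by the same device as in Theorem~\ref{fr}: a Riesz basic (hence Bessel) sequence $\{P\widetilde{k}_{z_i}\}$ has $\{z_i\}$ a finite union of Carleson sequences (Theorem~\ref{mck} applied to $\{\widetilde{k}_{z_i}\}$, which is Bessel by part (ii)), so $\{z_i\}$ is Blaschke, and one shows directly that if the $P$-Grammian is bounded below then so is the $H^2$-Grammian by exhibiting, for each candidate null sequence $(\lambda_i)$, an $H^2$ function (a Blaschke product with the appropriate zeros, pushed into $\phi H^2$) that detects non-vanishing — in effect re-running the synthesis-operator injectivity argument of Theorem~\ref{fr} inside $Ran(P)$. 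Everything else is a routine transcription of the proof of Proposition~\ref{inner} with $Q = P_\phi$ in place, plus the operator inequality $\langle Qx,x\rangle \le \langle Px,x\rangle$ and its consequences for Grammians via Propositions~\ref{riesz1} and \ref{riesz2}.
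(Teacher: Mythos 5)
Your overall strategy -- reduce to $Q=P_\phi$ via $Q\le P\le I$, sandwich the Grammians as $F_\phi F_\phi^*\le F_PF_P^*\le FF^*$ on finitely supported sequences, and then invoke Proposition~\ref{inner}, Proposition~\ref{riesz2} and the injectivity argument of Theorem~\ref{fr} -- is exactly the paper's strategy, and parts (i), (ii) and the forward direction of (iv) are carried out correctly (the paper proves (ii) by the dual identity $\langle x,P_\phi\widetilde{k}_{z_i}\rangle=\langle P_\phi x,P\widetilde{k}_{z_i}\rangle$, which is just the same Grammian domination in disguise).

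The genuine problem is your treatment of the reverse implication in (iv). You declare that ``going upward from the $P$-Grammian to the full $H^2$-Grammian is not a contraction estimate'' and then propose to handle it by re-running the Blaschke-product/synthesis-operator injectivity argument of Theorem~\ref{fr}. That route does not work as stated: injectivity of $F^*$ gives no uniform lower bound on $FF^*$, which is what ``Riesz basic'' requires via Proposition~\ref{riesz2}. But no such argument is needed, because the inequality you already wrote down goes the right way for this direction: $F_PF_P^*\le FF^*$ (since $\|P y\|\le\|y\|$ for $y=\sum\alpha_i\widetilde{k}_{z_i}$), so if $\{P\widetilde{k}_{z_i}\}$ is Riesz basic then $cI\le F_PF_P^*\le FF^*$, and $\{\widetilde{k}_{z_i}\}$ is Bessel by part (ii); Proposition~\ref{riesz2} finishes it in one line. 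This is precisely how the paper disposes of (iv). The Blaschke-product machinery you invoke there is instead what the paper uses for (iii): given that $\{P\widetilde{k}_{z_i}\}$ is a frame sequence, one must show $Ker(F_P^*)=\{0\}$, and this follows from $Ker(F_P^*)\subseteq Ker(F_\phi^*)$ (by $F_\phi F_\phi^*\le F_PF_P^*$) together with $Ker(F_\phi^*)=\{0\}$, which comes from the diagonal similarity $F_\phi F_\phi^*=D\,FF^*D^*$ and the Theorem~\ref{fr} argument showing $Ker(F^*)=\{0\}$. Your sketch of (iii) gestures at the right idea but leaves the kernel-triviality step unspecified; relocating your Theorem~\ref{fr}/Theorem~\ref{mck} discussion from (iv) to (iii) and filling in the chain of kernel inclusions would make the proof complete.
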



\begin{proof}Let $P_{\phi}$ denote the orthogonal projection onto 
$\phi H^2.$ Then $P_{\phi}\le P$ and thus, $\|P_{\phi}k_{z_i}\| \le \|Pk_{z_i}\| \le \|k_{z_i}\|.$ This proves (i), since $\delta \|k_{z_i}\| \le \|P_{\phi}k_{z_i}\|.$
 
To prove (ii), we first note that for any $x\in H^2,$ 
$$ \langle{x, P_{\phi}\widetilde{k}_{z_i}}\rangle = \langle{P_{\phi}x, P\widetilde{k}_{z_i}}\rangle  
$$
Thus, if $\{P\widetilde{k}_{z_i}\}_{i\in \bb N}$ is a Bessel sequence, then $\{P_{\phi}\widetilde{k}_{z_i}\}_{i\in \bb N}$ is a Bessel sequence and hence, $\{\widetilde{k}_{z_i}\}$ is a Bessel sequence, using Proposition \ref{inner}. The other implication follows trivially from the fact that $P\in B(H^2).$  

We shall now prove (iii) and (iv). Note that if $\{\widetilde{k}_{z_i}\}_{i\in \bb N}$ is a Bessel sequence, then 
\begin{equation}\label{P}
\left(\langle{P_\phi\widetilde{k}_{z_j}, P_\phi\widetilde{k}_{z_i}}\rangle\right) \le  \left(\langle{P\widetilde{k}_{z_j}, P\widetilde{k}_{z_i}}\rangle\right) \le \left(\langle{\widetilde{k}_{z_j},\widetilde{k}_{z_i}}\rangle\right) 
\end{equation}
To prove (iii), we first assume that $\{P\widetilde{k}_{z_i}\}_{i\in \bb N}$ is a frame sequence. Then it is a Bessel sequence and thus $\{\widetilde{k}_{z_i}\}_{i\in \bb N}$ is also a Bessel sequence. So, Equation 
(\ref{P}) holds and we get $F_{\phi}F_{\phi}^*\le F_PF_P^* \le FF^*,$ where $F_{\phi}, \ F_P$ and $F$ are the analysis operators corresponding to the sequences $\{P_\phi\widetilde{k}_{z_i}\}_{i\in \bb N}, \ \{P\widetilde{k}_{z_i}\}_{i\in \bb N}$ and $\{\widetilde{k}_{z_i}\}_{i\in \bb N},$ respectively. We claim that $Ker(F_P^*) = \{0\}.$ To accomplish the claim, we first note that  $Ker(F_P^*) \subseteq Ker(F_{\phi}^*),$ 
using Equation (\ref{P}). Further, using the same ideas as used in the proof of Theorem \ref{fr}, we conclude that $Ker(F^*) = \{0\}.$ This implies that $Ker(F_{\phi}^*) = \{0\},$ using Equation (\ref{D}). Thus it follows that $Ker(F_P^*) = \{0\}.$ 

Lastly, $F_P^*$ is onto as well, since 
$\{P\widetilde{k}_{z_i}\}_{i\in \bb N}$ is a frame sequence. Therefore $F_P^*$ and hence $F_P$ is an invertible operator, which implies that the sequence $\{P\widetilde{k}_{z_i}\}_{i\in \bb N}$ is a Riesz basic sequence. The other  implication in (iii) is trivially true.        
  
Finally, (iv) follows from Equation (\ref{P}), using (iv) of Proposition \ref{inner} together with Proposition \ref{riesz2}.   
\end{proof}

\begin{thm}\label{inner5}Let $P\in B(H^2)$ 
be an orthogonal projection. Let $\{z_i\}_{i\in \bb N}$ be a sequence in $\bb D$ such that there exists an inner function $\phi$ with $|\phi(z_i)|\ge \delta$ for all $i$ and $\phi H^2\subseteq Ran(P).$ If $\{P\widetilde{k}_{z_i}\}_{i\in \bb N}$ is a Bessel sequence, then it satisfies the FC.
\end{thm}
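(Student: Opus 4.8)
The plan is to reduce everything to the Hardy space $H^2$, where the Feichtinger conjecture for normalized kernel functions is already available via Theorem~\ref{NM}, and then transport the resulting partition back to $\cl H(P)$ (equivalently, to the sequence $\{P\widetilde{k}_{z_i}\}_{i\in\bb N}$) using the two-sided comparison in Proposition~\ref{inner4}.

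First I would use the hypothesis that $\{P\widetilde{k}_{z_i}\}_{i\in\bb N}$ is a Bessel sequence together with part (ii) of Proposition~\ref{inner4} to conclude that $\{\widetilde{k}_{z_i}\}_{i\in\bb N}$ is a Bessel sequence of normalized kernel functions in $H^2$. By Theorem~\ref{NM}, there is then a partition $A_1,\dots,A_n$ of $\bb N$ such that each $\{\widetilde{k}_{z_i}\}_{i\in A_\ell}$ is a Riesz basic sequence in $H^2$. Next I would fix $\ell$ and observe that the subsequence $\{z_i\}_{i\in A_\ell}$ still satisfies the hypotheses of Proposition~\ref{inner4}: the same inner function $\phi$ works, $|\phi(z_i)|\ge\delta$ holds for every $i\in A_\ell$, and the containment $\phi H^2\subseteq Ran(P)$ does not depend on the index set. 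Hence part (iv) of Proposition~\ref{inner4}, applied to the subsequence indexed by $A_\ell$, gives that $\{P\widetilde{k}_{z_i}\}_{i\in A_\ell}$ is a Riesz basic sequence. Since $\ell$ was arbitrary, $\{P\widetilde{k}_{z_i}\}_{i\in\bb N}$ is a union of $n$ Riesz basic sequences, i.e., it satisfies the FC.

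There is essentially no hard step: all the real content has been packaged into Theorem~\ref{NM} (Nikolski's theorem that $H^2$ satisfies the FCKF) and into Proposition~\ref{inner4}, which isolates $\phi H^2\subseteq Ran(P)$ and the separation $|\phi(z_i)|\ge\delta$ as exactly the conditions making the passage to and from $H^2$ reversible at the level of Bessel sequences and Riesz basic sequences. The only point requiring (routine) care is the remark that every one of these hypotheses is inherited by subsequences, so that Proposition~\ref{inner4} may legitimately be invoked piece by piece on the blocks $A_1,\dots,A_n$ produced by Theorem~\ref{NM}.
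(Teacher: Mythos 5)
Your proposal is correct and is exactly the argument the paper intends: the paper's proof is the single line ``follows immediately from Proposition~\ref{inner4},'' and the details it suppresses are precisely the ones you supply --- part (ii) to get that $\{\widetilde{k}_{z_i}\}_{i\in\bb N}$ is Bessel in $H^2$, Theorem~\ref{NM} to partition it into Riesz basic sequences, and part (iv) applied blockwise (legitimately, since the hypotheses pass to subsequences) to transfer the partition back to $\{P\widetilde{k}_{z_i}\}_{i\in\bb N}$. No gaps.
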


\begin{proof}The proof follows immediately from Proposition~\ref{inner4}.
\end{proof}

To illustrate the above theorem we would like to mention a few examples where the hypotheses of Proposition~\ref{inner4} are satisfied, and so the conclusions of Proposition~\ref{inner4} and Theorem~\ref{inner5} 
apply. 

\begin{ex}\label{ex1}Let $P\in B(H^2)$ be the orthogonal projection onto the closed linear span of 
$\{z^j:j\ne j_1,\dots, j_n\}, \ j_1<\cdots<j_n,$ and let  $\{z_i\}_{i\in \bb N}$ be a sequence in $\bb D$ such that there exists a constant
$\delta>0$ with $|z_i|\ge \delta$ for all $i.$ Then, $\phi(z)=z^{j_n+1}$ is an inner function, $\phi H^2\subseteq Ran(P)$ and $|\phi(z_i)|\ge {\delta}^{j_n+1}$ for all  $i.$ 
Hence $P$ and $\{z_i\}_{i\in \bb N}$ satisfy the conditions of Proposition \ref{inner4}.
\end{ex}

\begin{ex}\label{ex2}Given an inner function $\phi, \ [\bb C+ \phi H^2]$ denotes the span of 
$\bb C$ and $\phi H^2$ in $H^2.$ These spaces were first introduced in \cite{Rag}. Let $\{z_i\}_{i\in \bb N}$ be a sequence in $\bb D$ 
and let $\phi$ be an inner function such that $|\phi(z_i)|\ge \delta>0$ for all  $i.$ Then the orthogonal 
projection $P$ onto $[\bb C+H^2_\phi]$ and $\{z_i\}_{i\in \bb N}$ satisfy the conditions of Proposition \ref{inner4}. 
\end{ex}

\begin{ex}\label{ex3}Let $P\in B(H^2)$ be an orthogonal projection such that the kernel of $P$ is spanned by $n$ inner functions $\phi_1, \dots, \phi_n$ . Then 
$\phi = z\phi_1\cdots\phi_n$ is an inner function and $\phi H^2\subseteq Ran(P).$
Now, if $\{z_i\}_{i\in \bb N}$ is a sequence in $\bb D$ such that there exists a constant $\delta>0$ with 
$|z_i|\ge \delta, \ |\phi_{k}(z_i)|\ge \delta$ for all $i$ and for all $k,$ then $|\phi(z_i)|\ge {\delta}^{n+1}$ for all $i.$ Thus $P$ and $\{z_i\}_{i\in \bb N}$ satisfy the conditions of Proposition 
\ref{inner4}.
\end{ex}

\begin{remark}If $\phi$ is a finite Blaschke product and $\{z_i\}_{i\in \bb N}\subseteq \bb D$ converges to 1, 
then the condition $|\phi(z_i)|\ge \delta,$ follows automatically for all, but finitely 
many $z_i's$. Because, the zeroes of $\phi$ 
lies in the set $\{z:|z|<r\}$ for some $r>0,$ and $|z_i|$ converges to 1. Hence, when $\phi$ is 
a finite Blaschke and $\{|z_i|\}$ converges to 1, then the bounded below assumption on $\phi$ 
in Proposition \ref{inner4} and Theorem \ref{inner5}  is redundant.
\end{remark}
 
\begin{remark}\label{inner7} For the case, when $P$ is an orthogonal projection, the Hilbert space $\cl H(P)$ coincides with $Ran(P).$
Further, in this case, if there exists a constant 
$\delta>0$ such that $\ {\delta}\|k_{z_i}\|\le \|Pk_{z_i}\|\le \|k_{z_i}\|$ for all  $i,$ then 
$\{P\widetilde{k}_{z_i}\}_{i\in \bb N}$ is a Bessel (frame or Riesz basic) sequence iff $\{\widetilde{k}^P_{z_i}\}_{i\in \bb N}$ 
is Bessel (frame or Riesz basic) sequence. Hence, if $P$ is an orthogonal projection and $\{z_i\}_{i\in \bb N}$ is a sequence in $\bb D$ such that there exists an inner function $\phi$ satisfying the condition of Theorem \ref{inner5}, then whenever $\{\widetilde{k}_{z_i}^P\}_{i\in \bb N}$ is a  
Bessel sequence in $\cl H(P)$, it  satisfies the FC.
\end{remark}


Our last results focus on the weighted Hardy spaces on the unit disk. We shall briefly define these spaces here, for more details we refer to \cite{Shi}.

Let $\{\beta_n\}$ be a sequence of positive numbers with $R={\rm \liminf} (\beta_n)^{1/n} >0.$ Then the set $\{\sum_na_nz^n: \sum_n\beta_n^2|a_n|^2<\infty\}$ is a reproducing kernel Hilbert space on the disk of radius $R$ with norm $\|\sum_na_nz^n\|_{\beta}^2=\sum_n\beta_n^2|a_n|^2$ and reproducing kernel $K_{\beta}(z,w)=\sum_n\frac{\bar{w}^nz^n}{{\beta_n}^2}.$ This Hilbert space is called a weighted Hardy space and is denoted by $H^2(\beta).$ If we let $P\in B(H^2)$ be a positive, diagonal contraction with $n^{th}$ diagonal entry $p_n.$ Then the space $\cl H(P)$ coincides with the weighted Hardy space $H^2(\beta),$ where $\beta_n=\frac{1}{\sqrt{p_n}}$ for every $n$ and the functions in $H^2(\beta)$ are restricted to the unit disk $\bb D.$


\begin{prop}\label{diagonal}Let $P\in B(H^2)$ be a positive operator and $D\in B(H^2)$ be a 
positive, diagonal operator such that $\alpha D\le P \le \beta D$ for some $\alpha,\ \beta>0.$ Then:
\begin{enumerate}
\item [(i)]$\{P^{1/2}\widetilde{k}_{z_i}\}_{i\in \bb N}$ is a Bessel sequence iff $\{D^{1/2}\widetilde{k}_{z_i}\}_{i\in \bb N}$ is a Bessel sequence,
\item[(ii)] $\{P^{1/2}\widetilde{k}_{z_i}\}_{i\in \bb N}$ is a frame sequence iff $\{D^{1/2}\widetilde{k}_{z_i}\}_{i\in \bb N}$ is a frame sequence,
\item[(iii)] $\{P^{1/2}\widetilde{k}_{z_i}\}_{i\in \bb N}$ is a Riesz basic sequence iff $\{D^{1/2}\widetilde{k}_{z_i}\}_{i\in \bb N}$ is a Riesz basic sequence.
\end{enumerate}
\end{prop}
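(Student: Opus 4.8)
The plan is to reduce everything to a single Grammian comparison and then invoke the machinery already developed. First I would write $F_P$ and $F_D$ for the analysis operators associated to the sequences $\{P^{1/2}\widetilde{k}_{z_i}\}_{i\in\bb N}$ and $\{D^{1/2}\widetilde{k}_{z_i}\}_{i\in\bb N}$, so that
$$
F_PF_P^* = \left(\langle P^{1/2}\widetilde{k}_{z_j}, P^{1/2}\widetilde{k}_{z_i}\rangle\right) = \left(\langle P\widetilde{k}_{z_j},\widetilde{k}_{z_i}\rangle\right),
$$
and similarly $F_DF_D^* = \left(\langle D\widetilde{k}_{z_j},\widetilde{k}_{z_i}\rangle\right)$. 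The hypothesis $\alpha D\le P\le\beta D$ does not by itself compare these two Grammians as operators on $\ell^2$ (the map $A\mapsto (\langle A\widetilde{k}_{z_j},\widetilde{k}_{z_i}\rangle)$ is positive, so $\alpha F_DF_D^* \le F_PF_P^* \le \beta F_DF_D^*$ does hold — this is exactly the observation I want). Indeed, if $G\in B(H^2)$ is the map sending $e_i\mapsto\widetilde{k}_{z_i}$ extended by linearity and density where it makes sense, the point is simply that $0\le\alpha D\le P\le\beta D$ implies $\alpha\,(\langle D\widetilde{k}_{z_j},\widetilde{k}_{z_i}\rangle)\le(\langle P\widetilde{k}_{z_j},\widetilde{k}_{z_i}\rangle)\le\beta\,(\langle D\widetilde{k}_{z_j},\widetilde{k}_{z_i}\rangle)$ as quadratic forms, hence as bounded operators on $\ell^2$ once either side is known bounded.

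With the operator inequality $\alpha F_DF_D^* \le F_PF_P^* \le \beta F_DF_D^*$ in hand, part (i) is immediate: $F_PF_P^*$ is bounded iff $F_DF_D^*$ is bounded, which by observation (i) in the list after Proposition~\ref{FC-equi} is exactly the Bessel condition for each sequence. For part (iii), I would use Proposition~\ref{riesz2}: $\{P^{1/2}\widetilde{k}_{z_i}\}$ is a Riesz basic sequence iff it is Bessel and $F_PF_P^*\ge cI$ for some $c>0$; by the two-sided inequality and part (i) this holds iff $\{D^{1/2}\widetilde{k}_{z_i}\}$ is Bessel and $F_DF_D^*\ge c'I$ for some $c'>0$, i.e.\ iff $\{D^{1/2}\widetilde{k}_{z_i}\}$ is a Riesz basic sequence. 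For part (ii), the frame-sequence condition is that the Grammian is bounded below on the orthogonal complement of its kernel (as used in the proof of Proposition~\ref{r_equi}); since $\alpha F_DF_D^*\le F_PF_P^*\le\beta F_DF_D^*$ forces $Ker(F_PF_P^*)=Ker(F_DF_D^*)$ and makes boundedness below on that common complement equivalent for the two operators, (ii) follows.

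The main obstacle, and the only genuinely non-routine point, is justifying that $\alpha D\le P\le\beta D$ in $B(H^2)$ transfers to the corresponding inequality between the compressed Grammians on $\ell^2$ — in particular making sure the formally-unbounded operator $G:e_i\mapsto\widetilde{k}_{z_i}$ is handled correctly. The clean way is to avoid $G$ entirely: for any finitely supported $\boldsymbol\lambda=(\lambda_i)$, $\langle F_PF_P^*\boldsymbol\lambda,\boldsymbol\lambda\rangle = \langle P\,y,y\rangle$ where $y=\sum_i\overline{\lambda_i}\,\widetilde{k}_{z_i}\in H^2$, and likewise $\langle F_DF_D^*\boldsymbol\lambda,\boldsymbol\lambda\rangle=\langle D\,y,y\rangle$; applying $\alpha D\le P\le\beta D$ to the vector $y$ gives the quadratic-form inequality on the dense set of finitely supported sequences, which extends by continuity once one side is bounded. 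Everything else is a direct appeal to Propositions~\ref{riesz1}, \ref{riesz2}, \ref{r_equi} and the Grammian dictionary. I would also remark that this proposition is exactly what is needed to push the weighted-Hardy-space discussion through: it lets one replace a general positive $P$ by a diagonal $D$ comparable to it, at which point $\cl H(D)$ is a weighted Hardy space $H^2(\beta)$ with $\beta_n = 1/\sqrt{d_n}$.
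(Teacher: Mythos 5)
Your proof is correct and follows essentially the same route as the paper: the paper's entire argument is the observation that $\alpha D\le P\le\beta D$ gives $\alpha\|D^{1/2}x\|^2\le\|P^{1/2}x\|^2\le\beta\|D^{1/2}x\|^2$ for all $x\in H^2$, which, applied to $x=\sum_i\lambda_i\widetilde{k}_{z_i}$, is exactly your two-sided Grammian (quadratic-form) comparison $\alpha F_DF_D^*\le F_PF_P^*\le\beta F_DF_D^*$. You simply spell out the details the paper leaves as ``we can easily verify,'' and your handling of (ii) via equality of the kernels of the two Grammians is the right way to fill that gap.
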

\begin{proof}As a direct consequence of the given inequalities we get
$$
\alpha\|D^{1/2}x\|^2\le \|P^{1/2}x\|^2\le\beta \|D^{1/2}x\|^2
$$
for every $x\in H^2.$ Using this we can easily verify that (i), (ii) and (iii) holds true. 
\end{proof}

\begin{thm}Let $P$ be a positive operator and $D\in B(H^2)$ be a positive, diagonal operator
such that $\alpha D\le P \le \beta D$ for some $\alpha,\ \beta>0.$ Then  $\{P^{1/2}\widetilde{k}_{z_i}\}_{i\in \bb N}$ satisfies the FC iff  $\{D^{1/2}\widetilde{k}_{z_i}\}_{i\in \bb N}$ satisfies the FC.
\end{thm}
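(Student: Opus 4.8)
The plan is to deduce this immediately from Proposition~\ref{diagonal} together with the Grammian reformulation of the FC in Proposition~\ref{FC-equi}. The point is that ``satisfies the FC'' is entirely a statement about the Grammian $FF^*$ of the sequence, and the hypothesis $\alpha D \le P \le \beta D$ will force the two Grammians to be comparable in a way that is compatible with partitioning.

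First I would set $f_i = P^{1/2}\widetilde{k}_{z_i}$ and $g_i = D^{1/2}\widetilde{k}_{z_i}$, and let $F$ and $G$ denote the corresponding analysis operators. The inequality $\alpha\|D^{1/2}x\|^2 \le \|P^{1/2}x\|^2 \le \beta\|D^{1/2}x\|^2$ for all $x \in H^2$, already extracted in the proof of Proposition~\ref{diagonal}, says exactly that $\alpha G^*G \le F^*F \le \beta G^*G$ as operators on $H^2$ (here I am thinking of $F^*, G^* \colon \ell^2 \to H^2$ as the synthesis operators, so $F^*e_i = f_i$, etc.; one should be slightly careful that $F^*F$ and $FF^*$ are different operators, but $\|Fx\|^2 = \langle F^*Fx, x\rangle$ is what the displayed inequality controls). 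By Proposition~\ref{diagonal}(i), $\{f_i\}$ is Bessel iff $\{g_i\}$ is, so we may assume both are Bessel (otherwise the FC statement is vacuous or both fail at the Bessel level and there is nothing to partition in the intended sense).

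Next I would invoke Proposition~\ref{FC-equi}: $\{g_i\}_{i\in\bb N}$ satisfies the FC iff there is a partition $A_1,\dots,A_n$ of $\bb N$ and constants $c_1,\dots,c_n>0$ with $P_{A_j} GG^* P_{A_j} \ge c_j P_{A_j}$ for all $j$. The key observation is that for any subset $A \subseteq \bb N$, restricting to the coordinate subspace $\ell^2(A)$ and using that $P_A$ commutes with nothing in particular but that $\langle GG^* \eta, \eta\rangle = \|G^*\eta\|_{H^2}^2$ for $\eta \in \ell^2(A)$, the comparability $\alpha\|G^*\eta\|^2 \le \|F^*\eta\|^2 \le \beta\|G^*\eta\|^2$ on all of $\ell^2$ (in particular on $\ell^2(A)$) gives $\alpha\, P_A GG^* P_A \le P_A FF^* P_A \le \beta\, P_A GG^* P_A$ as operators on $\ell^2(A)$. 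Hence $P_{A_j} GG^* P_{A_j} \ge c_j P_{A_j}$ implies $P_{A_j} FF^* P_{A_j} \ge \alpha c_j P_{A_j}$, so the same partition witnesses that $\{f_i\}$ satisfies the FC, with constants $\alpha c_j$; the reverse implication is symmetric, using $\beta$ in place of $\alpha$ and dividing. This proves both directions.

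The main obstacle — really the only subtle point — is making sure the operator inequality $\alpha D \le P \le \beta D$ on $H^2$ transfers correctly to an inequality between the Grammians $FF^*$ and $GG^*$ on $\ell^2$, rather than between $F^*F$ and $G^*G$. One clean way around this is to note that $FF^*$ and $F^*F$ have the same nonzero spectrum and that the FC condition in Proposition~\ref{FC-equi} can equally be checked with the compressions $P_A F F^* P_A$; alternatively, one can observe directly that for $\eta = \sum_{i\in A}\alpha_i e_i$, $\langle P_A FF^* P_A \eta,\eta\rangle = \|\sum_{i\in A}\alpha_i f_i\|^2 = \|F^* P_A \eta\|^2$, and $\|F^* P_A \eta\|^2$ is controlled above and below by $\|G^* P_A \eta\|^2 = \langle P_A GG^* P_A \eta,\eta\rangle$ via the same constants $\alpha,\beta$, since $F^* = P^{1/2}$ and $G^* = D^{1/2}$ composed with the common map $e_i \mapsto \widetilde{k}_{z_i}$. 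This is the step I would write out carefully; everything else is a direct appeal to Propositions~\ref{FC-equi} and~\ref{diagonal}.
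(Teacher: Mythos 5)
Your argument is correct and is in substance the same as the paper's, which simply applies Proposition~\ref{diagonal} to each block of the partition (Proposition~\ref{diagonal}(iii) for a subsequence $\{z_i\}_{i\in A_j}$ is exactly your comparability of the compressed Grammians $P_{A_j}FF^*P_{A_j}$ and $P_{A_j}GG^*P_{A_j}$). The only blemish is the parenthetical claim that the norm inequality ``says exactly that $\alpha G^*G\le F^*F\le \beta G^*G$'' --- it actually controls the synthesis direction, i.e.\ $FF^*$ versus $GG^*$ on $\ell^2$ --- but your final paragraph identifies and resolves this correctly, so the proof stands.
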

\begin{proof}This follows immediately from Proposition \ref{diagonal}.
\end{proof}

We conclude with a brief summary. As mentioned earlier, in the case of an orthogonal projection $P,$ in order to prove that a bounded Bessel sequence $\{P\widetilde{k}_{z_i}\}_{i\in \bb N}$ satisfies the FC, it is enough to have that the corresponding sequence $\{\widetilde{k}_{z_i}^P\}_{i\in \bb N}$ in $\cl H(P)$ satisfies the FC. Hence, when $P$ is an orthogonal projection we shall only mention the results about sequences of normalized kernel functions in $\cl H(P).$ 
 
\begin{itemize}
\item $H^2$ satisfies the FCKF (Nikolski, 2006). Note that $H^2=\cl H(P)$ with $P=I,$ the identity operator.  

\item Given an inner function $\phi,$ every Bessel sequence $\{\widetilde{k}_{z_i}^P\}$ in $\cl H(P)$ such that ${\sup_i}|\phi(z_i)|<1$ satisfies the FCKF, where $P$ is the orthogonal projection onto $H^2\ominus \phi H^2$(Baranov and Dyakonov, 2009).

\item Given a one-component inner function $\phi,$ the de Branges $\cl H(P)$ satisfies the FCKF, where $P$ is the orthogonal projection onto $H^2\ominus \phi H^2$(Baranov and Dyakonov, 2009).

\item Given a positive, invertible operator $P\in B(H^2),$ every Bessel sequence $\{P\widetilde{k}_{z_i}\}$ satisfies the FC.

\item Given a positive, invertible operator $P\in B(H^2),$ the space 
$\cl H(P)$ satisfies the FCKF.

\item Given an inner function $\phi,$ the space $\cl H(P_\phi)$ satisfies the FCKF, where $P_{\phi}$ is the orthogonal projection onto $\phi H^2$ (Theorem \ref{inner3}).

\item Given an orthogonal projection $P,$ if $\{\widetilde{k}_{z_i}^P\}$ is a Bessel sequence in $\cl H(P)$ such that there exists an inner function $\phi$ with ${\rm inf_i}|\phi(z_i)|>0$ and $\phi H^2$ contained in the range of $P,$ then $\{\widetilde{k}_{z_i}^P\}_{i\in \bb N}$ satisfies the FC (Remark \ref{inner7}).
\end{itemize}

\end{document}